\documentclass[12pt]{amsart}
\usepackage{cite}



\usepackage{amssymb}

\usepackage{enumitem}

\usepackage{graphicx}

\makeatletter
\@namedef{subjclassname@2020}{%
  \textup{2020} Mathematics Subject Classification}
\makeatother

\usepackage[T1]{fontenc}


\newtheorem{theorem}{Theorem}[section]

\newtheorem{lemma}[theorem]{Lemma}




\theoremstyle{definition}



\numberwithin{equation}{section}


\frenchspacing

\textwidth=13.5cm
\textheight=23cm
\parindent=16pt
\oddsidemargin=-0.5cm
\evensidemargin=-0.5cm
\topmargin=-0.5cm




\DeclareMathOperator{\mmod}{\:mod}
\DeclareMathOperator{\capJ}{J}
\DeclareMathOperator{\capL}{L}

\def\a{v}
\def\r{\rho}
\def\c{w}


\begin{document}


\baselineskip=17pt


\title[On the Factorization of lacunary polynomials]{On the Factorization of lacunary polynomials}

\author{Michael Filaseta}
\address{Mathematics Department\\ University of South Carolina\\
Columbia, SC  29208\\
USA}
\email{filaseta@math.sc.edu}

\date{}

\begin{abstract}
This paper addresses the factorization of polynomials of the form 
$F(x) = f_{0}(x) + f_{1}(x) x^{n} + \cdots + f_{r-1}(x) x^{(r-1)n} + f_{r}(x) x^{rn}$ 
where $r$ is a fixed positive integer and the $f_{j}(x)$ are fixed polynomials
in $\mathbb Z[x]$ for $0 \le j \le r$.  We provide an efficient method for showing that for $n$ sufficiently large and reasonable conditions on the $f_{j}(x)$, 
the non-reciprocal part of $F(x)$ is either $1$ or irreducible.  We illustrate the approach including giving two examples that arise from 
trace fields of hyperbolic $3$-manifolds.
\end{abstract}

\subjclass[2020]{Primary 11R09; Secondary 11C08, 12E05, 57M50}

\keywords{factorization, sparse polynomial, lacunary polynomial, reciprocal polynomial}

\maketitle

\centerline{\it Dedicated to the fond and inspirational memory of A.~Schinzel.}

\section{Introduction}

The main goal of this paper is to explain how to obtain information about the factorization of polynomials of the form 
\begin{equation}\label{generalfformeqone}
F(x) = \sum_{j=0}^{r} f_{j}(x) x^{nj}, 
\end{equation}
where $r$ is an integer greater than or equal to $1$, the polynomials $f_{0}(x), \ldots,$ $f_{r}(x)$ are in $\mathbb Z[x]$ and $n$ is a sufficiently large positive integer.  
In line with the work of A.~Schinzel described in the next paragraph, for a non-zero Laurent polynomial $F(x) \in \mathbb Q[x,x^{-1}]$, we define
$\capJ F$ as $x^{k} F(x)$ where $k \in \mathbb Z$ is chosen so that $x^{k} F(x) \in \mathbb Q[x]$ and the constant term of
$x^{k} F(x)$ is non-zero.
Our particular interest is in determining general conditions on the polynomials $f_{0}(x), \ldots, f_{r}(x)$ for which the \textit{non-reciprocal part} of $F(x)$ 
is either $1$ or irreducible provided only that $n$ is sufficiently large.  
The non-reciprocal part of a polynomial $F(x) \in \mathbb Z[x]$ is defined as follows.  
The reciprocal of a non-zero polynomial $f(x) \in \mathbb Q[x]$ is $\tilde{f}(x) = \capJ f(1/x) = x^{\deg f} f(1/x)$. 
A non-zero polynomial $f(x) \in \mathbb Q[x]$ is \emph{reciprocal} if $f(x) = \pm \tilde{f}(x)$.   
The non-reciprocal part of $f(x) \in \mathbb Z[x]$ is $f(x)$ removed of each
irreducible reciprocal factor in $\mathbb Z[x]$ with a positive leading coefficient.
The latter is to be interpreted as removing these factors to the multiplicity to which 
they appear so that only irreducible non-reciprocal factors remain.  
ln Schinzel's work (cf.~\cite{lac1,lac3,lac12}), the non-reciprocal part of $f(x)$ is denoted $\capL f(x)$, 
though we will avoid using this notation in the remainder of this paper.

This paper is motivated in part by the tremendous amount of work A.~Schinzel produced on the factorization of lacunary (or sparse) polynomials 
(cf.~\cite{reduciblelac, schlac,  lac1, lac2, lac3, lac4, lac5, lac6, lac7, lac7errata, lac8, lac9, lac10, lac11, lac12, lac12corr, schwoj}).
In particular, his early papers on the subject gave a description of how the \textit{non-reciprocal part} of a lacunary polynomial $f(x)$ factors,
and later he extended his results to the \textit{non-cyclotomic part} of a lacunary polynomial $f(x)$ (that is $f(x)$ removed of all of its cyclotomic polynomial
factors) in the case that $f(x)$ is not reciprocal.  The latter was made possible by advancements by E.~Bombieri and U.~Zannier described
in an appendix by Zannier in \cite{schbook2} (also, see the work of E.~Bombieri , D.~W.~Masser and U.~Zannier \cite{bmz}).

The paper is also motivated by private communications of the author with S.~Garoufalidis.  
S.~Garoufalidis pointed out the significance of determining the factorization of polynomials of the form \eqref{generalfformeqone}  
to determine the trace field associated with $\Gamma \subset \text{PSL}_{2}(\mathbb C)$.  
In particular, he pointed out the trace field of the Whitehead link $W_{m/n}$ is $\mathbb Q(x-1/x)$ where $x$ is some root of
\[
(x(x+1))^m x^{4n}-(x-1)^m
\]
(see W.~D.~Neumann and A.~W.~Reid \cite{neurei});
the trace field of the Whitehead link for the complement of the $(-2,3,3+2n)$ pretzel knot $K_n$ is $\mathbb Q(x+1/x)$ where $x$ is some root of
\[
x^{4n}+\left(-x^3+4x^2-8+4x^{-2}-x^{-3}\right)x^{2n}+1
\]
(see M.~L.~Macasieb and T.~W.~Mattman \cite{macmat});
and the trace field under $-a/b$ Dehn fillings of the complement of the $4_1$ knot is $\mathbb Q(x+1/x)$ where $x$ is some root of
\[
x^{4b} - x^{2b} - x^{a} - 2 - x^{-a} - x^{-2b} + x^{-4b}
\]
(S.~Garoufalidis, private communication).  

The first of these three Laurent polynomials is easier to approach given current information in the literature, and 
S.~Garoufalidis and the author \cite{filgar} show that if $m$ is a fixed odd positive integer,
there is an $N(m)$ such that if $n \ge N$ is an integer relatively prime to $m$, then the polynomial given by
$(x(x+1))^m x^{4n}-(x-1)^m$ is $x^{2}+1$ times an irreducible polynomial (over $\mathbb Q$).  
The other two Laurent polynomials are more difficult to find literature to exploit.  
There are two reasons for this.  First, if $F(x)$ is one of these two Laurent polynomials, then $\capJ F(x)$ is reciprocal.  
Based on current knowledge, determining the complete factorization of reciprocal polynomials in a general form is more difficult 
than the nonreciprocal case.  
For example, with regard to Schinzel's work described above, 
he requires a condition that $\capJ F(x)$ is reciprocal when discussing 
the factorization of the non-cyclotomic part of $F(x)$ (see \cite[Theorem~2]{lac12}).
Second, the first of the three Laurent polynomials is the special case of \eqref{generalfformeqone} with $r = 1$
which has been investigated more fully in the literature (eg., \cite{FFK, flps, SSS, schcover}), whereas the second and third
Laurent polynomials above are cases where $r \ge 2$ in \eqref{generalfformeqone} for which similar literature is lacking.

In general, what is reasonable to expect given the current literature on the subject is that 
except for exceptional cases, perhaps explicitly laid out, one can show
that $F(x)$ in \eqref{generalfformeqone} for large $n$ has non-reciprocal part which is either
identically $1$ or irreducible.  In the case that $F(x)$ itself is reciprocal, this would necessarily mean
that the non-reciprocal part is $1$ and that each irreducible factor is reciprocal.  
The latter is of some significance.  Given an arbitrary non-zero polynomial $g(x) \in \mathbb Q[x]$, 
the polynomial $g(x) \tilde{g}(x)$ is reciprocal.  Thus, apriori, given only that $F(x)$ is reciprocal, we
cannot deduce that its irreducible factors are themselves reciprocal.  Knowing the irreducible factors 
are reciprocal also provides information about the Galois group of a number field generated by the roots
of an irreducible factor of the polynomial. 
More precisely, if $f(x)$ is an irreducible reciprocal polynomial of degree $n > 1$, then $n = 2m$
for some integer $m \ge 1$ and the associated Galois group is isomorphic to a subgroup of all signed permutations 
on $m$ objects and, hence, has order dividing $2^{m} m!$ (see \cite{dds}). 

With the above in mind, our main result is the following.

\begin{theorem}\label{themainthm}
Let $r \ge 1$ be an integer, and
\[
F(x,y) = f_{0}(x) + f_{1}(x) y + \cdots + f_{r}(x) y^{r} \in \mathbb Z[x,y],
\]
with 
\[
\gcd{}_{\mathbb Z[x]}\big(f_{0}(x), f_{1}(x), \ldots, f_{r}(x) \big) = 1,
\quad
f_{0}(0) \ne 0,
\quad \text{and} \quad
f_{r}(x) \ne 0.
\]
Let $n$ be sufficiently large.  
Then the non-reciprocal part of $F(x,x^{n})$ is not reducible if, 
whenever we write $n = k \ell + t$, where $k, \ell$ and $t$ are integers satisfying
\begin{equation}\label{mainthmeq}
k \ge 2 \max_{0 \le j \le r} \{ \deg(f_{j}) \}  \quad \text{ and } \quad
0 \le t < \dfrac{k}{r(r+1)}
\end{equation}
the polynomial
\[
f_{0}(x) 
+ f_{1}(x) x^{t} y^{\ell}
+ \cdots
+ f_{r-1}(x) x^{(r-1)t} y^{(r-1)\ell}
+ f_{r}(x) x^{rt} y^{r \ell}
\]
is a power of $x$ (possibly $x^{0}$) times an irreducible polynomial in $\mathbb Z[x,y]$,
and whenever we write $n = k \ell - t$, where $k, \ell$ and $t$ are integers satisfying
\eqref{mainthmeq}, the polynomial
\[
f_{0}(x) x^{rt}
+ f_{1}(x) x^{(r-1)t} y^{\ell}
+ \cdots
+ f_{r-1}(x) x^{t} y^{(r-1)\ell}
+ f_{r}(x) y^{r \ell}
\]
is a power of $x$ (possibly $x^{0}$) times an irreducible polynomial in $\mathbb Z[x,y]$.  
\end{theorem}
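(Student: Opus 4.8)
The plan is to argue by contraposition. Assume $n$ is large and the non-reciprocal part of $F(x,x^{n})$ is reducible; I would then produce a decomposition $n=k\ell+t$ or $n=k\ell-t$ satisfying \eqref{mainthmeq} for which the associated bivariate polynomial in the statement is \emph{not} a power of $x$ times an irreducible polynomial. First I would record the elementary geometry of $F(x,x^{n})$: since $f_{0}(0)\ne 0$ we have $x\nmid F(x,x^{n})$; for $n$ large, $\deg_{x}F(x,x^{n})=rn+\deg f_{r}$; and the support of $F(x,x^{n})$ lies in the $r+1$ pairwise disjoint intervals $[\,nj,\;nj+\deg f_{j}\,]$, each of length at most $D:=\max_{0\le j\le r}\deg f_{j}$. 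The coprimality hypothesis guarantees that $F(x,y)$, and every bivariate lift that will appear, has no divisor in $\mathbb{Z}[x]$ of positive degree. Writing the non-reciprocal part as $G(x)H(x)$ with $G,H\in\mathbb{Z}[x]$ primitive, of positive degree, with positive leading coefficients, and writing $F(x,x^{n})=c\cdot(\text{reciprocal part})\cdot G(x)H(x)$, the task becomes to lift this factorization through the substitution $x^{n}\leadsto y$.

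The core of the argument is a Schinzel-type reduction lemma: reducibility of the non-reciprocal part of a lacunary polynomial with the above gap structure is always witnessed by a \emph{bounded} substitution. Concretely, I would prove that there are integers $k\ge 2D$, $\ell\ge 1$, and $t$ with $0\le t<k/\big(r(r+1)\big)$ such that, setting
\[
\Phi(x,y)=
\begin{cases}
\sum_{j=0}^{r} f_{j}(x)\,x^{tj}\,y^{\ell j}, & \text{if } n=k\ell+t,\\
\sum_{j=0}^{r} f_{j}(x)\,x^{(r-j)t}\,y^{\ell j}, & \text{if } n=k\ell-t,
\end{cases}
\]
one has $\Phi(x,x^{k})=x^{a}F(x,x^{n})$ for some $a\ge 0$ (immediate from $x^{n}=x^{\pm t}(x^{k})^{\ell}$), and $\Phi$ factors as $\Phi=\Gamma_{1}\Gamma_{2}$ in $\mathbb{Z}[x,y]$ with neither $\Gamma_{i}$ a power of $x$. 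The integer $\ell$ plays the role of the ``period'' of the factorization, and the essential point is that it may be chosen bounded independently of $n$, whence $k\ge 2D$ once $n$ is large. Establishing this boundedness is exactly where the reciprocal part of $F(x,x^{n})$ must be controlled: its cyclotomic factors through bounds on vanishing sums of roots of unity, and its reciprocal non-cyclotomic factors through the Bombieri--Zannier height gap (cf.\ the appendix to \cite{schbook2}, \cite{bmz}, and Schinzel's treatment in \cite{lac12}). The inequalities \eqref{mainthmeq} are precisely what force $\Phi(x,x^{k})$ to determine $\Phi$: the coefficient of $y^{\ell j}$ in $\Phi$ has $x$-degree at most $rt+D$, which is less than $k$ because $rt<k/(r+1)$ and $D\le k/2$, so the $r+1$ blocks of $\Phi$ remain disjoint after the substitution $y=x^{k}$; a gap principle applied to $F(x,x^{n})=G(x)H(x)\cdot(\text{reciprocal part})$ then shows the factors respect this block structure, allowing $G$ and $H$ to be promoted to $\Gamma_{1}$ and $\Gamma_{2}$ --- neither of which is a power of $x$, since $G$ and $H$ are non-reciprocal of positive degree and not divisible by $x$, whereas a power of $x$ specializes to a monomial.

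Granting the lemma, the decomposition $n=k\ell\pm t$ produced meets both conditions in \eqref{mainthmeq}, and the bivariate polynomial attached to it admits the nontrivial factorization $\Gamma_{1}\Gamma_{2}$, so it fails to be a power of $x$ times an irreducible polynomial in $\mathbb{Z}[x,y]$. This contradicts the hypothesis of the theorem; hence, if that hypothesis holds, the non-reciprocal part of $F(x,x^{n})$ is not reducible.

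The main obstacle is the reduction lemma of the second paragraph, and within it two points. First, separating off and discarding the reciprocal part of $F(x,x^{n})$ --- handling simultaneously the cyclotomic factors, whose occurrence is controlled by congruence conditions on $n$ coming from relations among roots of unity, and the reciprocal non-cyclotomic factors, where the Bombieri--Zannier machinery enters. Second, extracting the \emph{explicit} constants $2D$ and $k/\big(r(r+1)\big)$ rather than ineffective or enormous ones; the bookkeeping that tracks how the block widths, the $x^{tj}$-shifts, and the sizes of the discarded factors interact is precisely the ``efficient method'' advertised in the abstract. The two-case form of the statement is forced by whether $n$ lies just above or just below the multiple $k\ell$, hence by whether the operative lift is the forward one or the $x$-reversed one $\sum_{j}f_{j}(x)\,x^{(r-j)t}\,y^{\ell j}$, the latter matching $\widetilde{F(x,x^{n})}$ and being unavoidable since the reciprocal part of the factorization couples $F$ to its reverse.
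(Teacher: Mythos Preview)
Your contrapositive framing and the final bookkeeping (the block decomposition of $F(x,x^{n})$, the recognition that one must land on $n=k\ell\pm t$, and the arithmetic yielding the bound $t<k/(r(r+1))$) match the paper. But the heart of the argument --- your ``Schinzel-type reduction lemma'' --- is not the route the paper takes, and as you have sketched it there is a real gap. You propose to lift the factors $G,H$ of the non-reciprocal part directly, invoking a gap principle together with Bombieri--Zannier control of the reciprocal factors. The difficulty is that $G$ and $H$ (and the reciprocal part) have no a priori bound on their number of nonzero terms, so there is nothing forcing them to respect the block structure; no gap principle applies to them. The Bombieri--Zannier machinery you cite bounds degrees of torsion-coset factors, not the sparsity of arbitrary non-reciprocal factors, so it does not fill this hole, and in any case it would not deliver the explicit constants $2D$ and $k/(r(r+1))$.

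The paper avoids this entirely with a much more elementary trick. If the non-reciprocal part is reducible, one can write $F(x,x^{n})=u(x)v(x)$ with $u,v$ both \emph{non-reciprocal}; set $W=u\,\tilde v$. From $F\tilde F=W\tilde W$ one reads off $\|W\|^{2}=\|F\|^{2}$, so $W$ has a \emph{bounded} number of terms, independent of $n$. Now the set of exponents occurring in $F,\tilde F,W,\tilde W$ is of bounded size, and a Dirichlet box-principle lemma (Lemma~\ref{ffklemchapt10}) produces an integer $k\ge k_{0}$ with all these exponents lying within $\varepsilon k$ of multiples of $k$. One then lifts the identity $F\tilde F=W\tilde W$ itself (not the individual factors of $F$) to $G_{1}G_{2}=H_{1}H_{2}$ in $\mathbb Z[x,y]$; since any two of $G_{1},G_{2},H_{1},H_{2}$ share no non-$x$ factor, $G_{1}$ must split. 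The remainder of Section~\ref{multitoonesect} is exactly the block analysis you outlined, showing $G_{1}$ has the shape \eqref{polyoneeq} or \eqref{polytwoeq}. So your obstacle identification is off: no separation of cyclotomic or reciprocal parts is needed at all --- the $\|W\|=\|F\|$ observation plus pigeonhole does all the work.
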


The subscript on the greatest common divisor indicates the it is taken over the ring $\mathbb Z[x]$, 
so the polynomials $f_{0}(x), f_{1}(x), \ldots, f_{r}(x)$ are not all divisible by an element of $\mathbb Z[x]$ other than
one of the units $\pm 1$.  
The terminology ``not reducible'' instead of ``irreducible'' is to allow for the possibility that 
 the non-reciprocal part of $F(x,x^{n})$ is $1$.  
 Note also that the conditions in the theorem imply $\ell \ge 1$.  

For clarification, the work of A.~Schinzel beginning with \cite{lac1} (see also \cite{filpap}) already gives, in theory, 
an effective way of determining whether the non-reciprocal parts of $F(x,x^{n})$ in Theorem~\ref{themainthm}, 
and even more general polynomials, are reducible.  
The goal in this paper is to provide a more practical approach, 
a method in particular of efficiently handling polynomials like the Laurent polynomials described earlier.

In Section~\ref{twovarsection}, we elaborate on a method to show that polynomials in two variables of the form appearing
throughout Theorem~\ref{themainthm} are irreducible (or a power of $x$ times an irreducible polynomial).  
In Section~\ref{prelimsect}, we establish a preliminary result for Theorem~\ref{themainthm} along the lines given in \cite{FFK}.
A simpler variant of Theorem~\ref{themainthm} and Theorem~\ref{themainthm} itself will be established in Section~\ref{multitoonesect}.  
In Section~\ref{sectapplications}, 
we will look at consequences of Theorem~\ref{themainthm}.
We provide one example there involving the non-reciprocal polynomial
\begin{align*}
x^{6n} + (x+1) &x^{5n+1} + 2x^{4n} + (x^4 - x^3 - x^2 - 2x - 2)x^{3n-2} \\
&\quad+ 2x^{2n} + (x+1) x^{n-2} + 1,
\end{align*}
showing that for $n$ sufficiently large, the polynomial is irreducible.  
We then give examples corresponding to the last two
Laurent polynomials connected to trace fields indicated above,
and we obtain results showing that, for large enough exponent variables,
the associated polynomials have irreducible factors that are reciprocal.  

Before leaving the introduction, we comment that under the assumption of Lehmer's conjecture on the Mahler measure
of a non-cyclotomic irreducible polynomial in $\mathbb Z[x]$, each irreducible non-cyclotomic factor of 
polynomials of the form \eqref{generalfformeqone}, with $\gcd(f_{0}(x), f_{1}(x), \ldots, f_{r}(x)) = 1$, has degree $> c n$, 
where $c$ is a positive constant depending only
on the polynomials $f_{0}(x), f_{1}(x), \ldots, f_{r}(x)$ and where $n$ is an arbitrary positive integer (see \cite{gj}).
Furthermore, the recent proof of V.~Dimitrov \cite{dimi} on the Schinzel-Zassenhaus conjecture now implies the same result,
but with a different constant, unconditionally under rather general conditions on the fixed $f_{j}(x)$.  
In particular, with $f_{r}(x)$ a power of $x$, as in all the examples in Section~\ref{sectapplications}, 
such a result holds unconditionally.


\section{Factorization of polynomials in two variables}\label{twovarsection}
In this section, we are interested in polynomials of the general form
\begin{equation}\label{generalpoly}
F(x,y) = \sum_{j=0}^{r} f_{j}(x) y^{j} \in \mathbb Z[x,y],
\qquad \text{where } r \ge 1 \text{ and } f_{r}(x) \ne 0,
\end{equation}
and determining an irreducibility result for $F(x,y^{n})$ where $n$ is a positive integer. 
Suppose $F(x,y)$ is irreducible in $\mathbb Z[x,y]$ so that, in particular, there is no 
polynomial in $\mathbb Z[x]$ other than $\pm 1$ that divides every $f_{j}(x)$.  
Viewing $F(x,y)$ as a polynomial in $y$, we let $\alpha$ denote a root of $F(x,y) = 0$
(so $F(x,\alpha) = 0$) and consider the field $K = \mathbb Q(x)[\alpha]$.  
An important theorem due to Alfredo Capelli (see \cite{schinzelbook}) implies that 
$F(x,y^{n})$ is reducible in $\mathbb Z[x,y]$ if and only if the polynomial
$y^{n} - \alpha$ is reducible in $K[y]$.  Equally, important, Capelli's work also 
gives us precise knowledge on when $y^{n} - \alpha$ factors in $K[y]$.  He shows
that $y^{n} - \alpha$ factors in $K[y]$ if and only if at least one of the following holds:
\vskip 5pt \noindent
(i) \parbox[t]{15cm}{The number $n$ is divisible by a prime $p$ and $\alpha = \beta^{p}$ for some $\beta \in K$.}
\vskip 5pt \noindent
(ii) \parbox[t]{15cm}{The number $n$ is divisible by $4$ and $\alpha = -4\beta^{4}$ for some $\beta \in K$.}
\vskip 5pt \noindent
Observe that the first result of Capelli's above implies that in the case of (i), 
the polynomial $F(x,y^{p})$ is reducible in $\mathbb Z[x,y]$; and in the case of (ii), 
the polynomial $F(x,y^{4})$ is reducible in $\mathbb Z[x,y]$.  
We will determine a small set $\mathcal P$ of odd primes with the property that
if $\alpha = \beta^{p}$ for some odd prime $p$ and some $\beta \in K$, then $p \in \mathcal P$.  
Thus, determining the $n$ for which $F(x,y^{n})$ is irreducible (or reducible) in $\mathbb Z[x,y]$ will be 
reduced to checking the factorization of $F(x,y^{p})$ for $p \in \mathcal P$ and checking the
factorization of $F(x,y^{4})$.  

A particularly easy consequence of $\alpha = \beta^{p}$ is that 
the norm of $\alpha$ in $K$ over $\mathbb Q(x)$ is a $p$th power of an element of $\mathbb Q(x)$.  
Thus, from \eqref{generalpoly}, we see that if $\alpha = \beta^{p}$, 
then $(-1)^{r} f_{0}(x)/f_{r}(x)$ is a $p$th power in $\mathbb Q(x)$.  
Hence, an easy approach to determining a set $\mathcal P$ of odd primes $p$ for which $F(x,y^{p})$ can factor non-trivially 
is simply to take $\mathcal P$ to be the odd primes $p$ for which the polynomial $(-1)^{r} f_{0}(x)/f_{r}(x)$ is a $p$th power.  
However, one case of interest to us is the case where $(-1)^{r} f_{0}(x)/f_{r}(x) = \pm 1$ which 
is a $p$th power for every odd prime $p$.  With this in mind, an immediate goal is to determine something more, perhaps along similar lines, that one can use to narrow the search for primes $p$ for which 
$F(x,y^{p})$ factors non-trivially.  

We suppose as above that $F(x,y)$ is irreducible in $\mathbb Z[x,y]$. 
We also suppose that at least one of the $f_{j}(x)$ appearing in \eqref{generalpoly} has degree $> 0$
and that the coefficient of $y^{r}$ in $F(x,y)$ is a polynomial in $x$ (possibly of degree $0$) having a positive leading coefficient.  
Suppose further that $\alpha = \beta^{p}$ as described in (i) above.  
Recall that $\beta \in K = \mathbb Q(x)[\alpha]$.  On the other hand, $\alpha = \beta^{p}$ implies
$\alpha \in \mathbb Q(x)[\beta]$.  Since the degree of the extension $\mathbb Q(x)[\alpha]$ over $\mathbb Q(x)$
is $r$, we deduce that the degree of the extension $\mathbb Q(x)[\beta]$ over $\mathbb Q(x)$ 
is $r$.  Hence, there is an irreducible polynomial in $\mathbb Z[x,y]$ of degree $r$ in $y$
having $\beta$ as a root, say
\begin{equation}\label{betapoly}
G(x,y) = \sum_{j=0}^{r} g_{j}(x) y^{j} \in \mathbb Z[x,y],
\qquad \text{where } g_{r}(x) \ne 0.
\end{equation}
As with $F(x,y)$, we note that there is no 
polynomial in $\mathbb Z[x]$ other than $\pm 1$ that divides every $g_{j}(x)$, and we may suppose
that the coefficient of $y^{r}$ in $G(x,y)$ is a polynomial in $x$ (possibly of degree $0$) having a positive leading coefficient.  

Let $\alpha_{1}, \ldots, \alpha_{r}$ be the conjugates of $\alpha$ including $\alpha$ so that
$F(x,\alpha_{j}) = 0$ for $1 \le j \le r$.  
Since $\beta \in K = \mathbb Q(x)[\alpha]$, we can write $\beta$ 
as a polynomial in $\alpha$ with coefficients in $\mathbb Q(x)$, 
say $\beta = H(\alpha) \in \mathbb Q(x)[\alpha]$.     
We denote by $\beta_{j}$ the field conjugate $H(\alpha_{j})$.  
Observe that $\alpha = \beta^{p} = H(\alpha)^{p}$.  
Thus, each $\alpha_{j}$ is a root of $y = H(y)^{p}$; in other words,
$\alpha_{j} = H(\alpha_{j})^{p}$ and, hence, $\alpha_{j} = \beta_{j}^{p}$, for $1 \le j \le r$.  

We begin here with a description of the basic ``idea'' behind our approach connecting $G(x,y)$ to $F(x,y)$ and
leading us to a set of odd primes $\mathcal P$ as indicated above.
If we view $G(x,y)$ as a polynomial in $\mathbb F_{p}(x)[y]$, and consider the $r$ roots
$\beta'_{1}, \ldots, \beta'_{r}$ of $G(x,y)$
in an extension $K'$ of $\mathbb F_{p}(x)$, then 
\[
G(x,y)^{p} = \sum_{j=0}^{r} g_{j}(x)^{p} y^{pj} = g_{r}(x)^{p} \prod_{j=1}^{r} \big( y^{p} - (\beta'_{j}) ^{p} \big).
\]
Consequently, the polynomial $H(x,y) = \sum_{j=0}^{r} g_{j}(x)^{p} y^{j}$ will have roots 
$\big(  \beta'_{1}  \big)^{p}$, $\ldots, \big(  \beta'_{r}  \big)^{p}$ in $K'$.  Thus, in $\mathbb F_{p}[x,y]$, 
the polynomials $F(x,y)$ and $H(x,y)$ will differ at most by a factor in $\mathbb F_{p}[x]$.  
With a little more work, one can see that $F(x,y) = H(x,y)$
in $\mathbb F_{p}[x,y]$.  We give details for how one can establish such a result next.
Of some importance to us, what follows works not just for odd primes $p$ but for $p = 2$ as well.  

Let 
\[
\sigma_{1} = \sum_{j=1}^{r} \alpha_{j}, \ 
\sigma_{2} = \sum_{1 \le i < j \le r} \alpha_{i} \alpha_{j},  \ \ldots, \ \sigma_{r} = \prod_{j=1}^{r} \alpha_{j}, 
\]
denote the elementary symmetric functions of the roots of $F(x,y)$ viewed as a polynomial in $y$ over $\mathbb Q(x)$.  
Thus,
\[
\sigma_{1} = - \dfrac{f_{r-1}(x)}{f_{r}(x)}, \ 
\sigma_{2} = \dfrac{f_{r-2}(x)}{f_{r}(x)}, \  \ldots, \ \sigma_{r} = (-1)^{r} \,\dfrac{f_{0}(x)}{f_{r}(x)}.
\]
Let 
\[
\sigma'_{1} = \sum_{j=1}^{r} \beta_{j}, \ 
\sigma'_{2} = \sum_{1 \le i < j \le r} \beta_{i} \beta_{j},  \ \ldots, \ \sigma'_{r} = \prod_{j=1}^{r} \beta_{j}.
\]
For each $j \in \{ 1, 2, \dots, r \}$, we have $\sigma'_{j} = (-1)^{j} g_{r-j}(x)/g_{r}(x) \in \mathbb Q(x)$.  

We show next that there exist $w_{i,j}(x) \in \mathbb Z[x]$ satisfying
\begin{equation}\label{sigmaconnection}
\sigma_{j} = (\sigma'_{j})^{p} + \dfrac{p}{g_{r}(x)^{p}} \sum_{0 \le i < j} g_{r-i}(x) w_{i,j}(x)
\quad \text{for } 1 \le j \le r.
\end{equation}
Before turning to the proof of \eqref{sigmaconnection}, we explain our interest in it.  
Suppose  then for the moment that we know \eqref{sigmaconnection} holds.  
We rewrite $\sigma'_{j} = (-1)^{j} g_{r-j}(x)/g_{r}(x)$ to deduce that
\[
\sigma_{j} = \dfrac{1}{g_{r}(x)^{p}} \bigg( (-1)^{pj} g_{r-j}(x)^{p} + p \sum_{0 \le i < j} g_{r-i}(x) w_{i,j}(x) \bigg)
\quad \text{for } 1 \le j \le r,
\]
where we include the $p$ in the exponent $pj$ of $-1$ so that the identity also holds for $p = 2$.  
Since the $\sigma_{j}$ represent the elementary symmetric functions in $\alpha_{1}, \dots, \alpha_{r}$, the polynomial
\[
F_{0}(x,y) =  \sum_{j=0}^{r} \bigg( (-1)^{(p+1)j} g_{r-j}(x)^{p} + (-1)^{j} p \sum_{0 \le i < j} g_{r-i}(x) w_{i,j}(x) \bigg) y^{r-j}
\]
has roots $\alpha_{1}, \dots, \alpha_{r}$.  Let $u(x)$ denote an irreducible polynomial in $\mathbb Z[x]$.  
We claim that $u(x)$ cannot divide every coefficient of $F_{0}(x,y)$ as a polynomial in $y$.  Assume otherwise.
Then looking at the coefficients of $y^{r}$, $y^{r-1}$, $\ldots$, $y^{0}$ for $F_{0}(x,y)$ in turn, we deduce
that $u(x)$ divides each of $g_{r}(x)$, $g_{r-1}(x)$, $\ldots$, $g_{0}(x)$.  
This contradicts that no polynomial in $\mathbb Z[x]$ other than $\pm 1$ divides every $g_{j}(x)$.
Hence, $F_{0}(x,y)$ is a polynomial in $y$ with coefficients having 
no common factors in $\mathbb Z[x]$ other than $\pm 1$.
Since $F(x,y)$ also has this property and the coefficients of $y^{r}$ in both $F_{0}(x,y)$ and $F(x,y)$
are polynomials in $x$ having a positive leading coefficient, we obtain
\[
F(x,y) = F_{0}(x,y) \equiv \sum_{j=0}^{r} g_{j}(x)^{p} y^{j} \pmod{p},
\]
which as noted above, in our discussion of $H(x,y)$, was our immediate goal.  

We turn now to the proof of \eqref{sigmaconnection}.
For the proof, we recall that any symmetric polynomial in $\beta_{1}, \dots, \beta_{r}$ with coefficients in $\mathbb Z$ 
can be expressed as a polynomial in $\sigma'_{1}, \dots, \sigma'_{r}$ with coefficients in $\mathbb Z$.  In fact, we make
use of a particular argument for obtaining the polynomial in $\sigma'_{1}, \dots, \sigma'_{r}$.  
For a symmetric polynomial $h(\beta_{1}, \dots, \beta_{r}) \in {\mathbb Z}[\beta_{1}, \dots, \beta_{r}]$,
set $T = T_{h}$ to be the set of $r$-tuples $(\ell_{1},\dots,\ell_{r})$ with the coefficient of 
$\beta_{1}^{\ell_{1}} \cdots \beta_{r}^{\ell_{r}}$ in 
$h(\beta_{1}, \dots, \beta_{r})$ non-zero.  We define the {\it size}
of $h$ to be $(k_{1},\dots,k_{r})$ where $(k_{1},\dots,k_{r})$ is the
element of $T$ with $k_{1}$ as large as possible, $k_{2}$ as large as
possible given $k_{1}$, etc.  Since $h(\beta_{1}, \dots, \beta_{r})$
is symmetric, it follows that $(\ell_{1},\dots,\ell_{r}) \in T$ if and
only if each permutation of $(\ell_{1},\dots,\ell_{r})$ is in $T$.
This implies that $k_{1} \ge k_{2} \ge \cdots \ge k_{r}$.  Observe
that we can use the notion of size to form an ordering on the symmetric polynomials in
${\mathbb Z}[\beta_{1}, \dots, \beta_{r}]$ in the sense that if $h_{1}$
has size $(k_{1},\dots,k_{r})$ and $h_{2}$ has size $(k'_{1},\dots,k'_{r})$,
then $h_{1} > h_{2}$ if there is an $i \in \{ 0,1,\dots,r-1 \}$ 
such that $k_{1}=k'_{1},\dots,k_{i}=k'_{i}$, and $k_{i+1} > k'_{i+1}$.
Note that the elements of ${\mathbb Z}[\beta_{1}, \dots, \beta_{r}]$ which have
size $(0,0,\dots,0)$ are precisely the constants (the elements of ${\mathbb Z}$).

Since $\alpha_{j} = \beta_{j}^{p}$ for $1 \le j \le r$, we may view $\sigma_{j}$ as
a symmetric polynomial in $\beta_{1}, \dots, \beta_{r}$ of size $(p, \dots, p, 0, \dots, 0)$,
where the number of leading $p$'s is $j$.  
In general, we can take a given symmetric polynomial in ${\mathbb Z}[\beta_{1}, \dots, \beta_{r}]$ 
of size $(k_{1},\dots,k_{r})$ and write it in the form
\begin{equation}\label{symmetricreduction}
m \cdot \big(  \sigma'_{1}  \big)^{k_{1}-k_{2}} \big(  \sigma'_{2}  \big)^{k_{2}-k_{3}} 
\cdots \big(  \sigma'_{r-1}  \big)^{k_{r-1}-k_{r}} \big(  \sigma'_{r}  \big)^{k_{r}} 
+ s(\beta_{1}, \ldots, \beta_{r}),
\end{equation}
where $m \in \mathbb Z$ and $s(\beta_{1}, \ldots, \beta_{r})$ is a symmetric polynomial of 
size less than $(k_{1},\dots,k_{r})$.  Here, $m$ is chosen so that when the expression above is
expanded the term corresponding to $m \beta_{1}^{k_{1}} \beta_{2}^{k_{2}} \cdots \beta_{r}^{k_{r}}$ 
agrees with the corresponding term in the given symmetric polynomial.   
We proceed to do this for $\sigma_{j}$ which again is of size $(p, \dots, p, 0, \dots, 0)$.  Thus, 
\[
\sigma_{j} = \big(  \sigma'_{j}  \big)^{p} + s_{1}(\beta_{1}, \ldots, \beta_{r}),
\]
for some symmetric polynomial in ${\mathbb Z}[\beta_{1}, \dots, \beta_{r}]$.  Of some significance here is
the multinomial theorem applied to $\big(  \sigma'_{j}  \big)^{p}$ assures us that each coefficient of
\[
s_{1}(\beta_{1}, \ldots, \beta_{r}) = \sigma_{j} - \big(  \sigma'_{j}  \big)^{p} \in {\mathbb Z}[\beta_{1}, \dots, \beta_{r}]
\]
is divisible by $p$.  Thus, we may write
\[
s_{1}(\beta_{1}, \ldots, \beta_{r}) = p \cdot s_{2}(\beta_{1}, \ldots, \beta_{r}),
\]
for some symmetric $s_{2} = s_{2}(\beta_{1}, \ldots, \beta_{r})  \in {\mathbb Z}[\beta_{1}, \dots, \beta_{r}]$.  
As the size of $s_{2}$ is necessarily less than the size $(p, \dots, p, 0, \dots, 0)$ of $\sigma_{j}$, we deduce
that the size of $s_{2}$ is of the form $(k_{1},\dots,k_{r})$ where $k_{j} < p$.  
Let $i \in \{ 1, 2, \dots, j \}$ be minimal such that $k_{i} < p$.  

Suppose first that $i > 1$.  
Then we rewrite $s_{2}(\beta_{1}, \ldots, \beta_{r})$ using \eqref{symmetricreduction} as
\[
m_{2} \big(  \sigma'_{1}  \big)^{k_{1}-k_{2}} \big(  \sigma'_{2}  \big)^{k_{2}-k_{3}} 
\cdots \big(  \sigma'_{r-1}  \big)^{k_{r-1}-k_{r}} \big(  \sigma'_{r}  \big)^{k_{r}} 
+ s_{3}(\beta_{1}, \ldots, \beta_{r}),
\]
where $m_{2} \in \mathbb Z$ and where $s_{3} = s_{3}(\beta_{1}, \ldots, \beta_{r})$ is a symmetric polynomial 
in ${\mathbb Z}[\beta_{1}, \dots, \beta_{r}]$.  
Since $k_{1} = p$, we note that the first term above has total degree $k_{1} \le p$ in
the variables $\sigma'_{1}, \sigma'_{2}, \ldots, \sigma'_{r}$.  It will be of some significance that 
similarly the total degree in each polynomial in $\sigma'_{1}, \sigma'_{2}, \ldots, \sigma'_{r}$ below is 
$\le p$ as we proceed.  
Observe further that $k_{i-1} = p$ and $k_{i} < p$ implies that $k_{i-1}-k_{i}$ is positive.  
Thus, in the first of the two terms above,  
$\sigma'_{i-1}$ appears as a factor.  
Note also that the size of $s_{3}$ is less than the size of $s_{2}$ and, in particular, 
still less than $(p, \dots, p, 0, \dots, 0)$.  Thus, if the size still has a first component of $p$, 
then we can repeat this process, rewriting $s_{3}$ as a multiple of at least one of 
$\sigma'_{t}$ for $1 \le t \le i-1$ plus a symmetric polynomial $s_{4}$ of smaller size, 
and then rewriting $s_{4}$ as well if the first component of the size remains $p$, and so on.  
As the sizes are decreasing, at some point the summand involving a new symmetric polynomial 
will have size $(k'_{1},\dots,k'_{r})$ with $k'_{1} < p$.  

Combining terms arising from above, we see that 
\[
\sigma_{j} = \big(  \sigma'_{j}  \big)^{p} + p \sum_{t=1}^{i-1} \sigma'_{t} s'_{t}(\sigma'_{1}, \ldots, \sigma'_{r})
+ p \,s_{0}(\beta_{1}, \ldots, \beta_{r}),
\]
where each $s'_{t} = s'_{t}(\sigma'_{1}, \ldots, \sigma'_{r})$ is a polynomial in ${\mathbb Z}[\sigma'_{1}, \dots, \sigma'_{r}]$
and $s_{0} = s_{0}(\beta_{1}, \ldots, \beta_{r})$ is a symmetric polynomial in ${\mathbb Z}[\beta_{1}, \dots, \beta_{r}]$
of size $(k'_{1},\dots,k'_{r})$ with $k'_{1} < p$. 
Note that, for $1\le t \le i-1$, the total degree of $\sigma'_{t} s'_{t}$ in the variables $\sigma'_{1}, \dots, \sigma'_{r}$ is $\le p$, so consequently $s'_{t}$ has total degree $< p$.  
At this point, we simply repeat the process of using and reusing \eqref{symmetricreduction} to rewrite 
$s_{0}$ as a polynomial in ${\mathbb Z}[\sigma'_{1}, \dots, \sigma'_{r}]$.  The significance of having
$k'_{1} < p$ is that at each stage from this point on, the size of the symmetric polynomials introduced will
have size with first component $< p$.  Observe that if $k_{1} < p$ in the first term
\[
m \big(  \sigma'_{1}  \big)^{k_{1}-k_{2}} \big(  \sigma'_{2}  \big)^{k_{2}-k_{3}} 
\cdots \big(  \sigma'_{r-1}  \big)^{k_{r-1}-k_{r}} \big(  \sigma'_{r}  \big)^{k_{r}} 
\]
of \eqref{symmetricreduction}, then the total degree of this term as a polynomial in 
$\sigma'_{1}, \dots, \sigma'_{r}$ is $k_{1} < p$.  In other words, we can repeatedly 
use \eqref{symmetricreduction} to rewrite 
$s_{0}$ as a polynomial $s'_{0} = s'_{0}(\sigma'_{1}, \ldots, \sigma'_{r})$ 
in the variables $\sigma'_{1}, \dots, \sigma'_{r}$ with total degree $< p$.  

Now, we have
\begin{equation}\label{sigmatosigmaprimes}
\sigma_{j} = \big(  \sigma'_{j}  \big)^{p} + p \sum_{t=1}^{i-1} \sigma'_{t} s'_{t}(\sigma'_{1}, \ldots, \sigma'_{r})
+ p \,s'_{0}(\sigma'_{1}, \ldots, \sigma'_{r}),
\end{equation}
where each $s'_{t}$ for $0 \le t \le i-1$ is a polynomial in ${\mathbb Z}[\sigma'_{1}, \dots, \sigma'_{r}]$ 
with total degree $< p$ in the variables $\sigma'_{1}, \dots, \sigma'_{r}$.  
We make the substitution $\sigma'_{t} = (-1)^{t} g_{r-t}(x)/g_{r}(x)$ in \eqref{sigmatosigmaprimes} to obtain
\[
\sigma_{j} = \big(  \sigma'_{j}  \big)^{p} + p \sum_{t=1}^{i-1} \dfrac{g_{r-t}(x) w_{t,j}(x)}{g_{r}(x)^{p}} 
+ \dfrac{p \,w_{0,j}(x)}{g_{r}(x)^{p-1}},
\]
where each $w_{t,j}(x)$ for $0 \le t \le i-1$ is in $\mathbb Z[x]$.  Since $i \le j$, we deduce that \eqref{sigmaconnection}
holds.  As noted earlier, this leads to 
\begin{equation}\label{pthpowerequatforF}
F(x,y) \equiv \sum_{j=0}^{r} g_{j}(x)^{p} y^{j} \pmod{p}.
\end{equation}
The above follows from the condition $\alpha = \beta^{p}$ in (i) and holds for any prime $p$ satisfying this condition.

Recall that we have required that at least one of the $f_{j}(x)$ appearing in \eqref{generalpoly} has degree $> 0$. 
We also took the coefficient of $y^{r}$ in $F(x,y)$ to be a polynomial in $x$ (possibly of degree $0$) having a positive leading coefficient.  This latter condition can be removed at this point since what we are interested in is the information from
\eqref{pthpowerequatforF} that each coefficient of $F(x,y)$ is a $p$th power modulo $p$, and if necessary we can multiply both sides of \eqref{pthpowerequatforF} by $(-1)^{p}$ to obtain a result in the case that the latter condition does not hold.
We define the height of $f_{j}(x)$ to be the maximum of the absolute values of the coefficients of $f_{j}(x)$. 
Suppose $p$ is an odd prime that is greater than the height $H(f_{j})$ for each $f_{j}(x)$.  This is more than we will need and only really require here that $f_{j}(x)$ is not constant modulo $p$ for some $j$.  
Thus, for some $j$, the coefficient $f_{j}(x)$ of $y^{j}$ on the left-hand side of
\eqref{pthpowerequatforF} viewed modulo $p$ is of degree $> 0$.  
It follows that the corresponding $g_{j}(x)$ is a non-constant polynomial modulo $p$ on the right-hand side of \eqref{pthpowerequatforF}.  In this case, the polynomial
$g_{j}(x)^{p}$ modulo $p$ must be of degree at least $p$ which in turn implies that $\deg f_{j} \ge p$.  Thus, we may take
$\mathcal P$ to be the set of odd primes $p$ for which 
\begin{equation}\label{primebound}
p \le \max\{ \deg f_{0}, \dots, \deg f_{r}, H(f_{0}), \dots, H(f_{r}) \}.
\end{equation}
In other words, \eqref{pthpowerequatforF} implies \eqref{primebound}.  
In particular, with $\mathcal P$ so chosen, if $F(x,y^{p})$ for $p \in \mathcal P$ and $F(x,y^{4})$ are all irreducible, then $F(x,y^{n})$ is irreducible for every positive integer $n$.   Recalling the opening paragraph of this section, we summarize the above as follows. 

\begin{theorem}\label{thmgeneralpoly}
Let $F(x,y)$ be given as in \eqref{generalpoly}, with $F(x,y)$ irreducible in $\mathbb Z[x,y]$ and with at least one $f_{j}(x)$ non-constant.  
Let $\mathcal P$ be the set of odd primes $p$ satisfying \eqref{primebound}.  
Then $F(x,y^{n})$ is reducible in $\mathbb Z[x,y]$ if and only if either $p|n$ and $F(x,y^{p})$ is reducible for some $p \in \mathcal P$, 
or $2|n$ and $F(x,y^{2})$ is reducible, or $4|n$ and $F(x,y^{4})$ is reducible.   
In particular, if $F(x,y^{p})$ for $p \in \mathcal P$ and $F(x,y^{4})$ are all irreducible, then $F(x,y^{n})$ is irreducible in $\mathbb Z[x,y]$ 
for every positive integer $n$.  
Furthermore, if $p \in \mathcal P \cup \{ 2 \}$ and $F(x,y^{p})$ is reducible, then $F(x,y)$ is necessarily of the form \eqref{pthpowerequatforF}.  
\end{theorem}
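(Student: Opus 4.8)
The plan is to deduce the theorem from the three ingredients already assembled in this section: Capelli's criterion that $F(x,y^n)$ is reducible in $\mathbb Z[x,y]$ precisely when $y^n-\alpha$ is reducible in $K[y]$; Capelli's classification (i)--(ii) of when $y^n-\alpha$ factors; and the fact, proved above, that $\alpha=\beta^p$ for a prime $p$ forces $F(x,y)\equiv\sum_{j=0}^{r} g_j(x)^p y^j\pmod p$, which in turn forces an odd such $p$ to satisfy \eqref{primebound}, i.e.\ to lie in $\mathcal P$. First I would isolate a soft lemma used throughout: for any positive integer $d$, if $F(x,y^d)$ is reducible then so is $F(x,y^{dm})$ for every positive integer $m$. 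Indeed, in a nontrivial factorization $F(x,y^d)=A(x,y)B(x,y)$ neither factor can have degree $0$ in $y$, since such a factor would divide every $f_j(x)$ and hence equal $\pm 1$ by the gcd hypothesis (itself a consequence of the irreducibility of $F(x,y)$); substituting $y\mapsto y^m$ then produces a nontrivial factorization of $F(x,y^{dm})$. In particular, $F(x,y^2)$ reducible implies $F(x,y^4)$ reducible, equivalently $F(x,y^4)$ irreducible implies $F(x,y^2)$ irreducible.

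For the stated equivalence, the direction $(\Leftarrow)$ is immediate from the soft lemma: each of the three conditions exhibits a value $d\in\{p,2,4\}$ with $d\mid n$ and $F(x,y^d)$ reducible, and taking $m=n/d$ promotes this to reducibility of $F(x,y^n)$. For $(\Rightarrow)$, suppose $F(x,y^n)$ is reducible. By Capelli's criterion $y^n-\alpha$ is reducible in $K[y]$, so (i) or (ii) holds. If (ii) holds then $4\mid n$ and $y^4-\alpha=y^4+4\beta^4$ factors by the Sophie Germain identity, so Capelli's criterion with exponent $4$ gives that $F(x,y^4)$ is reducible, which is the third clause. If (i) holds then some prime $p\mid n$ has $\alpha=\beta^p$, so $y^p-\alpha=(y-\beta)(y^{p-1}+\cdots+\beta^{p-1})$ is reducible and hence $F(x,y^p)$ is reducible; if $p=2$ this is the second clause, while if $p$ is odd the argument above leading to \eqref{pthpowerequatforF} and then \eqref{primebound} places $p$ in $\mathcal P$, the first clause.

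The ``in particular'' statement follows by contraposition together with the soft lemma: if every $F(x,y^p)$ with $p\in\mathcal P$ and $F(x,y^4)$ are irreducible, then $F(x,y^2)$ is irreducible as well, so for no $n$ can any of the three clauses hold, and $F(x,y^n)$ is irreducible for every $n$. For the ``furthermore'' statement, let $p\in\mathcal P\cup\{2\}$ with $F(x,y^p)$ reducible. Then $y^p-\alpha$ is reducible in $K[y]$ by Capelli's criterion, and since $p$ is prime the classification leaves only option (i) --- option (ii) would require $4\mid p$ --- so $\alpha=\beta^p$ for some $\beta\in K$; the derivation of \eqref{pthpowerequatforF} given above, which is valid for every prime including $p=2$, then shows that $F(x,y)$ is of the asserted form.

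I do not expect a genuine obstacle, since the substantive work --- the symmetric-function reduction yielding \eqref{sigmaconnection} and hence \eqref{pthpowerequatforF}, and the degree-and-height bound \eqref{primebound} --- is already in place, and this theorem merely repackages it. The points that need care are all bookkeeping: checking via the gcd hypothesis that the substitutions $y\mapsto y^m$ keep factorizations nontrivial, correctly slotting the primes $2$ and $4$ among the clauses, and noting that the hypothesis ``$F(x,y^4)$ irreducible'' already subsumes ``$F(x,y^2)$ irreducible'', so that the corollary's hypotheses are exactly as stated and no separate assumption on $F(x,y^2)$ is needed.
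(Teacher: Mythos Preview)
Your proposal is correct and follows essentially the same approach as the paper: the theorem is explicitly presented there as a summary of the preceding discussion, and you have accurately identified and assembled the three ingredients (Capelli's criterion, the classification (i)--(ii), and the derivation of \eqref{pthpowerequatforF} leading to \eqref{primebound}) exactly as intended. Your explicit ``soft lemma'' on propagating reducibility under $y\mapsto y^m$ and your observation that irreducibility of $F(x,y^4)$ subsumes that of $F(x,y^2)$ are left implicit in the paper but are indeed needed for the clean ``if and only if'' and the ``in particular'' clause, so your write-up is if anything slightly more careful.
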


There is some value in looking closer at the case that $\alpha = -4\beta^{4}$ as in (ii), which we briefly sketch here.  
We suppose now that $\alpha = -4\beta^{4}$ and $G(x,y)$ in \eqref{betapoly} has $\beta$ as a root, where again
$G(x,y)$ is an irreducible polynomial in $\mathbb Z[x,y]$ and $g_{r}(x)$ has a positive leading coefficient.
For $1 \le j \le r$, we define $\sigma_{j}$ and $\sigma'_{j}$ as before.  Instead of \eqref{sigmaconnection}, 
we have in this case
\begin{equation}\label{sigmaconnectionnew}
\sigma_{j} = (-4)^{j} (\sigma'_{j})^{4} + \dfrac{2^{2j+1}}{g_{r}(x)^{4}} \sum_{0 \le i < j} g_{r-i}(x) w_{i,j}(x)
\quad \text{for } 1 \le j \le r,
\end{equation}
for some polynomials $w_{i,j}(x) \in \mathbb Z[x]$.  The proof of \eqref{sigmaconnectionnew} is analogous to the
proof of \eqref{sigmaconnection}.  The implications are a bit different, and we discuss that next.

We will want to take advantage of a little more information on the $w_{i,j}(x)$.  Observe that
\[
\sigma_{r} 
= \alpha_{1} \cdots \alpha_{r}
= (-4)^{r} \big( \beta_{1}^{4} \cdots \beta_{r}^{4} \big)
= (-4)^{r} (\sigma'_{r})^{4}.
\]
Thus, in regards to \eqref{sigmaconnectionnew}, we can take $w_{i,r}(x) = 0$ for $0 \le i < r$.

Recalling $\sigma'_{j} = (-1)^{j} g_{r-j}(x)/g_{r}(x)$, we deduce
\[
F_{1}(x,y) =  \sum_{j=0}^{r} \bigg( 2^{2j} g_{r-j}(x)^{4} + (-1)^{j} 2^{2j+1} \sum_{0 \le i < j} g_{r-i}(x) w_{i,j}(x) \bigg) y^{r-j}
\]
has roots $\alpha_{1}, \ldots, \alpha_{r}$.  As with $F_{0}(x,y)$, we would like to determine whether the coefficients of 
$F_{1}(x,y)$ viewed as a polynomial in $y$ have any common factors.  Analogous to $F_{0}(x,y)$, we see that every
coefficient of $F_{1}(x,y)$ cannot be divisible by an irreducible polynomial $u(x)$ in $\mathbb Z[x]$ if either $u(x)$
is non-constant or $u(x)$ is an odd prime.  In other words, the only irreducible polynomials in $\mathbb Z[x]$
that might divide every coefficient of $F_{1}(x,y)$ are $\pm 2$.  Suppose $2^{s}$ divides every coefficient of 
$F_{1}(x,y)$ for some positive integer $s$ with $s$ maximal.  It follows that $F_{1}(x,y)/2^{s}$ is an irreducible
polynomial in $\mathbb Z[x,y]$ having roots $\alpha_{1}, \ldots, \alpha_{r}$.  
Since $g_{r}(x)$ and $f_{r}(x)$ have a positive leading coefficient, we necessarily have
\begin{align*}
F(x,y) &= \dfrac{F_{1}(x,y)}{2^{s}} \\[5pt]
&= \dfrac{1}{2^{s}} \sum_{j=0}^{r} \bigg( 2^{2j} g_{r-j}(x)^{4} + (-1)^{j} 2^{2j+1} \sum_{0 \le i < j} g_{r-i}(x) w_{i,j}(x) \bigg) y^{r-j}.
\end{align*}

Let $e_{j}$ be the non-negative integer for which
$2^{e_{j}}$ is the largest power of $2$ dividing the coefficient of $y^{j}$ in $F_{1}(x,y)$ in $\mathbb Z[x]$. 
Let $e'_{j}$ be the non-negative integer for which $2^{e'_{j}}$ is the largest power of $2$ dividing $g_{j}(x)$ in $\mathbb Z[x]$.  
In particular, $e_{r} = 4 e'_{r}$.  
From the formulation of $F_{1}(x,y)$ above, we see that 
$e_{j} \ge 2r -2j$ for each $j \in \{ 0, 1, \ldots, r \}$ and
if $e'_{j} = 0$, then $e_{j} = 2r -2j$.  Also, the irreducibility of $G(x,y)$ guarantees that some $e'_{i} = 0$.  
Given such an $i$, we deduce $s \le 2r-2i$, and $2^{2(i-j)}$ divides the coefficient of $y^{j}$ in $F(x,y)$ for $0 \le j < i$. 
In particular, if the maximal such $i$ is $\ge 1$, then $f_{0}(x)$ is divisible by $4$.  
Suppose the only such $i$ is $i = 0$.  Then either $s < 2r$ and $f_{0}(x)$ is divisible by $2$
or $s = 2r$.  We consider the case that $s = 2r$.   
From $s = 2r$, $e'_{0} = 0$ and $w_{i,r}(x) = 0$ for $0 \le i < r$, we have $f_{0}(x) = g_{0}(x)^{4}$.  
If $r$ is odd, then $s \equiv 2 \pmod{4}$.  As $e_{r} = 4 e'_{r} \equiv 0 \pmod{4}$, we
see that $f_{r}(x)$ is divisible by $4$ in this case.   

Thus far we have seen that either $f_{0}(x)$ is divisible by $2$ or $f_{r}(x)$ is divisible by $4$ unless
$r$ is even, $s = 2r$, $e'_{0} = 0$, and $e'_{j} \ge 1$ for $1 \le j \le r$.  Furthermore, in this case, $s = 2r$ implies
that $2^{2r}$ divides each coefficient of $y^{j}$ in $F_{1}(x,y)$ and, in particular, $2^{2r}$ divides $g_{r}(x)^{4}$.  
Thus, $e'_{r} \ge r/2$. 
We will be interested in the particular case that $r = 4$, so suppose 
$f_{0}(x)$ is not divisible by $2$ and $f_{r}(x)$ is not divisible by $4$, giving us the above conclusions with $r = 4$.  
We make use of the explicit formula in this case that
\begin{align*}
\sigma_{1} &= \sum_{j=1}^{4} (-4 \beta_{j}^{4}) \\[5pt]
&= -4 \big(
(\sigma'_{1})^{4} - 4 (\sigma'_{1})^{2} (\sigma'_{2}) + 4 (\sigma'_{1}) (\sigma'_{3}) + 2 (\sigma'_{2})^{2} - 4 (\sigma'_{4})
\big),
\end{align*}
which leads to the more precise formulation of the polynomial
\begin{gather*}
4 g_{3}(x)^{4} - 16 g_{3}(x)^{2} g_{2}(x) g_{4}(x) + 16 g_{3}(x) g_{1}(x) g_{4}(x)^{2} \\
+ 8 g_{2}(x)^{2} g_{4}(x)^{2} - 16 g_{0}(x) g_{4}(x)^{3}
\end{gather*}
for the coefficient of $y^{r-1} = y^{3}$ in $F_{1}(x,y)$.  This polynomial is required to be divisible by $2^{2r} = 2^{8}$ in $\mathbb Z[x]$.  
As $e'_{4} \ge 4/2 = 2$ and $e'_{j} \ge 1$ for $1 \le j \le 3$, we see that each term beyond the first term is divisible by $2^{9}$
in $\mathbb Z[x]$, so the first term $4 g_{3}(x)^{4}$ must be divisible by $2^{8}$.  It follows that $e'_{3} \ge 2$.  We deduce that
the coefficient of $y^{3}$ in $F_{1}(x,y)$ is divisible by $2^{9}$ so that the coefficient of $y^{3}$ in $F(x,y)$ is divisible by $2$.  
Hence, we have that in the case that $\alpha = -4\beta^{4}$ as in (ii) and $r = 4$, then either $f_{0}(x)$ or $f_{3}(x)$ is divisible 
by $2$ or $f_{4}(x)$ is divisible by $4$.

To summarize, in the case $r = 4$, we have shown that if $F(x,y^{4})$ is reducible, then either $F(x,y^{2})$ is reducible and 
\eqref{pthpowerequatforF} holds or one of $f_{0}(x)$ or $f_{3}(x)$ is divisible by $2$ or $f_{4}(x)$ is divisible by $4$.
For general odd $r$, we have also seen that if $F(x,y^{4})$ is reducible, then either $F(x,y^{2})$ is reducible and 
\eqref{pthpowerequatforF} holds or $f_{0}(x)$ is divisible by $2$ or $f_{4}(x)$ is divisible by $4$.  

We end this section with an example that will play a role later in our paper.
\vskip 8pt \noindent
\textbf{Example.} 
Let 
\[
F(x,y) = 1-x(x+1)y-2x^2y^2-x^2(x+1)y^3+x^4y^4
\]
so that $F(x,y)$ is of the form \eqref{generalpoly} with 
$f_{0}(x) = 1$, $f_{1}(x) = -x(x+1)$, $f_{2}(x) = -2x^{2}$, $f_{3}(x) = -x^{2}(x+1)$ and $f_{4}(x) = x^{4}$.
One checks directly that $F(x,y)$ and $F(x,y^{4})$ are irreducible.  Since $f_{1}(x) = -x(x+1)$, we see that for every odd prime $p$,
the congruence in \eqref{pthpowerequatforF} does not hold.  
Theorem~\ref{thmgeneralpoly} now implies that $F(x,y^{n})$ is irreducible for every positive integer $n$.


\section{A theorem connecting single and multivariate polynomials}\label{prelimsect}
To help with the statements of the results below and the proofs to follow, we discuss
notation here.  The expression $a \mmod k$ will denote the unique integer 
$b$ in $[0,k)$ for which $a \equiv b \pmod{k}$.  If $u$ is a real number,
$\lfloor u \rfloor$ will denote the greatest integer $\le u$, 
$\lceil u \rceil$ the least integer $\ge u$, and 
$\Vert u\Vert$ the 
minimal distance from $u$ to an integer.  We will use 
$\{ u \}$ to denote $u - \lfloor u \rfloor$ unless it is clear from the context
that $\{ u \}$ refers to a set consisting of the single element $u$.  
In the case that $f(x)$ is a polynomial, $\Vert f\Vert$ represents the $2$-norm
defined as the squareroot of the sum of the squares of the coefficients of $f(x)$.  
As before, the reciprocal of the polynomial $f(x)$ is $\tilde{f}(x) = x^{\deg f} f(1/x)$,
and the non-reciprocal part of $f(x) \in \mathbb Z[x]$ is $f(x)$ removed of each
irreducible reciprocal factor in $\mathbb Z[x]$ with a positive leading coefficient.

\begin{theorem}\label{mainlemma}
Let
$F(x) = \sum_{j=0}^{r} a_{j} x^{d_{j}} \in \mathbb Z[x]$, where
$0 = d_{0} < d_{1} < \cdots < d_{r}$ and $a_{r} a_{0} \ne 0$.   
Let $\varepsilon \in (0,1/4]$, and let $k_{0}$ be a real number $\ge 2$.  
Set  $\kappa = \lceil 1/\varepsilon \rceil$.  
Suppose that
\[
\deg F \ge \max \bigg\{ \dfrac{\kappa^{2N-2}}{2} \bigg( \varepsilon - \dfrac{1}{\kappa}  \bigg)^{-1}, 
k_{0} \big( \kappa^{N-1} + \varepsilon \big) \bigg\}, 
\ \text{ where }  N = 2\,\Vert F \Vert^{2} + 2 r - 5.
\]
If the non-reciprocal part of $F(x)$ is reducible in $\mathbb Z[x]$, 
then there exists an integer $k$ in the interval $[k_{0}, (\deg F)/(1-\varepsilon))$ satisfying:
\begin{itemize}
\item[(i)] 
For each $j \in \{0,1,\dots,r\}$, the number 
$d_{j} \mmod k$ is in 
\[
[0,\varepsilon k) \cup ((1-\varepsilon) k,k).
\]
\item[(ii)]
If $\overline{d}_{j}$ and $\ell_{j}$ are defined by 
\[
\overline{d}_{j} = (d_{j} + \lfloor \varepsilon k \rfloor) \mmod k \quad \text{and} \quad
d_{j} + \lfloor \varepsilon k \rfloor = k \ell_{j} + \overline{d}_{j}
\]
and $G(x,y) = \sum_{j=0}^{r} a_{j} x^{\overline{d}_{j}} y^{\ell_{j}}$,  
then $x^{-M} G(x,y)$ is reducible in $\mathbb Z[x,y]$, where $M$ is a non-negative 
integer chosen as large as possible with the constraint that 
$x^{-M} G(x,y) \in \mathbb Z[x,y]$.
\end{itemize}
\end{theorem}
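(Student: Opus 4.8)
The plan is to convert the reducibility hypothesis into an explicit factorization of $F$, then to locate the modulus $k$ by a pigeonhole over a geometric sequence of ``scales'', and finally to push the factorization through the substitution $y\mapsto x^{k}$. First I would replace ``the non-reciprocal part of $F$ is reducible'' by the standard equivalent form (the one used in \cite{FFK} and in Schinzel's work): there exist non-constant $f,g\in\mathbb Z[x]$ with $F=fg$, $f(0)g(0)\ne 0$, and $f(x)\tilde f^{\,-1}$... more precisely $f(x)\tilde g(x)\ne\pm\tilde f(x)g(x)$. Since $F(0)=a_{0}\ne0$, the condition $f(0)g(0)\ne0$ is automatic once $f,g$ are non-constant; the real content is the last inequality, which records that the splitting is not ``reciprocal-trivial'' and which is exactly what guarantees that the factor of $G$ we eventually extract is not a mere monomial. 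I would normalize so that $1\le\deg f\le\deg g$.

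Next I would locate $k$. Condition (i) is precisely that $\Vert d_{j}/k\Vert<\varepsilon$ for every $j$, i.e.\ each $d_{j}$ lies within $\varepsilon k$ of a multiple of $k$; moreover I want a $k$ that is \emph{adapted} to the factorization $fg$, meaning that the exponents occurring in $f$ and in $g$ also cluster modulo $k$ with total window $<k$. I would obtain such a $k$ by a nested-interval argument running through about $N$ stages: start from an interval of candidate moduli; at each stage impose the near-divisibility constraint coming from one more piece of exponent data; observe that on an interval long enough to resolve that datum's near-divisors a proportion at least $\varepsilon-1/\kappa$ of the moduli satisfy it; pass to a sub-interval whose length has been divided by roughly $\kappa=\lceil 1/\varepsilon\rceil$. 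That this process survives all $\approx N$ stages and still terminates at an integer $k\in[k_{0},(\deg F)/(1-\varepsilon))$ is exactly what is bought by the hypothesis
\[
\deg F\ \ge\ \max\Bigl\{\tfrac12\kappa^{2N-2}\bigl(\varepsilon-\tfrac1\kappa\bigr)^{-1},\ k_{0}\bigl(\kappa^{N-1}+\varepsilon\bigr)\Bigr\};
\]
the appearance of $\Vert F\Vert^{2}$ (and not merely $r$) inside $N$ reflects that one must cluster not only the $r+1$ exponents of $F$ but also the auxiliary combinatorial data attached to the two factors.

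Finally, with $k$ fixed, a short computation using $\varepsilon\le1/4$ gives $\overline d_{j}\in[0,2\varepsilon k)\subseteq[0,k/2)$ for every $j$, so $\deg_{x}G<k$ and $G(x,x^{k})=x^{\lfloor\varepsilon k\rfloor}F(x)$; hence $x^{-M}G(x,x^{k})=x^{\lfloor\varepsilon k\rfloor-M}F(x)$ with $\lfloor\varepsilon k\rfloor\ge M$, and, since $F(0)\ne0$, $\capJ\bigl(x^{-M}G(x,x^{k})\bigr)=F(x)$. Because $k$ is adapted to the factorization, writing $f$ and $g$ in ``base $x^{k}$'' produces $f^{*},g^{*}\in\mathbb Z[x,y]$ with $\deg_{x}f^{*}+\deg_{x}g^{*}<k$ and $f^{*}(x,x^{k})=x^{a}f(x)$, $g^{*}(x,x^{k})=x^{b}g(x)$. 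Then $f^{*}g^{*}$ still has $x$-degree $<k$, so by uniqueness of the base-$x^{k}$ expansion with $x$-digits of degree $<k$ it must coincide, up to a power of $x$, with $G(x,y)$; thus $x^{-M}G(x,y)$ equals a power of $x$ times $f^{*}(x,y)g^{*}(x,y)$. Since $f$ and $g$ are non-constant with non-zero constant term they are not monomials, so neither $f^{*}$ nor $g^{*}$ is a power of $x$, and $x^{-M}G(x,y)$ is reducible in $\mathbb Z[x,y]$, which is (ii).

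The hard part is the interplay of the last two steps: one must produce a single $k$ that simultaneously forces the exponents of $F$ to cluster and is compatible with an honest factorization $F=fg$, all the while keeping $k<(\deg F)/(1-\varepsilon)$. Turning this into a proof — checking that $\approx N$ rounds of scale-refinement, each costing a factor $\kappa$, still fit beneath the stated lower bound on $\deg F$, and that the inequality $\varepsilon-1/\kappa>0$ is enough to retain a positive proportion of candidates at each stage — is the quantitative heart of the argument, and is where the exact value $N=2\Vert F\Vert^{2}+2r-5$ gets pinned down.
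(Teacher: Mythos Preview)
Your outline has a genuine gap in the last step. You plan to write $f$ and $g$ in base $x^{k}$ as $f^{*},g^{*}\in\mathbb Z[x,y]$ and then identify $f^{*}g^{*}$ with $G$ up to a power of $x$ by uniqueness of the base-$x^{k}$ expansion. For that identification you need $\deg_{x}(f^{*}g^{*})<k$, which in turn requires the exponents of $f$ and of $g$ to cluster modulo $k$ in windows of total width $<k$. You correctly note this (``adapted to the factorization''), but then you propose to force it by running the pigeonhole over the exponents of $f$ and $g$ as part of the $N$ stages. That cannot work with $N=2\Vert F\Vert^{2}+2r-5$: there is no bound on the number of nonzero terms of $f$ or $g$ in terms of $\Vert F\Vert$. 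For instance $x^{n}-1$ has $\Vert F\Vert^{2}=2$ but factors as $(x-1)(x^{n-1}+\cdots+1)$, the second factor having $n$ terms. So the ``auxiliary combinatorial data attached to the two factors'' is simply not of size $O(\Vert F\Vert^{2})$, and your $k$ cannot be made adapted to an arbitrary factorization $F=fg$.

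The paper bypasses this by never trying to lift $F=fg$ directly. Instead, from $F=uv$ with $u,v$ non-reciprocal it forms $W(x)=u(x)\tilde v(x)$ and uses the identity $F\widetilde F=W\widetilde W$. The key point is that comparing the coefficient of $x^{d_{r}}$ gives $\Vert W\Vert^{2}=\Vert F\Vert^{2}$, so $W$ has at most $\Vert F\Vert^{2}$ nonzero terms. One then builds the exponent set $T$ from the exponents of $F$, $\widetilde F$, $W$, $\widetilde W$; this set has size at most $2\Vert F\Vert^{2}+2r-5=N$, which is where that number actually comes from. A single Dirichlet box-principle argument (not an iterative scale refinement) on $N$ real numbers produces the integer $k$. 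With all four polynomials having exponents clustered in $[0,2\varepsilon k)\subseteq[0,k/2)$, the lifts $G_{1},G_{2},H_{1},H_{2}$ satisfy $G_{1}G_{2}=H_{1}H_{2}$ by uniqueness of the base-$x^{k}$ expansion, and the fact that $F,\widetilde F,W,\widetilde W$ are pairwise distinct of the same degree forces $G=G_{1}$ to have a nontrivial factor other than a power of $x$.
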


The above result is very similar to Theorem 2 in \cite{FFK} and follows along very similar lines.  
As the result nevertheless is different, we present the details below.

Before beginning the proof, we observe that $k < d_{r}/(1-\varepsilon)$ implies
\[
k - 1 - \lfloor \varepsilon k \rfloor < k (1 - \varepsilon) < d_{r}.
\]
Hence, $d_{r} + \lfloor \varepsilon k \rfloor \ge k$ so that $\ell_{r} \ge 1$.  Thus, $G(x,y)$ depends on $y$.    

To establish Theorem \ref{mainlemma}, we make use of a seemingly
unconnected combinatorial result of interest in itself.  
Fix $\varepsilon > 0$.  
Suppose that $\a_{1}, \a_{2}, \dots, \a_{\r}$ are distinct non-negative integers
written in increasing order and that we wish to determine an integer
$k \ge 2$ satisfying the property that for each $j \in \{1,2,\dots,\r\}$, we have 
$\a_{j} \equiv  v'_{j} \pmod{k}$ for some $v'_{j} \in (-\varepsilon k,\varepsilon k)$.
In \cite{FFK}, the case $\varepsilon = 1/4$ was of particular importance and
focused on there.      
Observe that the value $k = \lfloor \a_{\r}/\varepsilon \rfloor + 1$ satisfies the property 
stated above for $k$, but we will want a smaller value of $k$.  We show that for each $\r$,
there exists a $V(\r) = V(\r,\varepsilon)$ such that if $\a_{\r} \ge V(\r)$, then there is a 
$k \in [2,\a_{\r}/(1-\varepsilon))$ such that for each $j \in \{1,2,\dots,\r\}$, we have
$\a_{j} \equiv v'_{j} \pmod{k}$ for some $v'_{j} \in (-\varepsilon k,\varepsilon k)$.  

\begin{lemma}\label{ffklemchapt10}
Let $\varepsilon \in (0,1)$, let $\r$ be a positive integer, and let
$k_{0}$ be a real number $\ge 2$.  Set  $\kappa = \lfloor 1/\varepsilon \rfloor + 1$ and
\[
V(\r) = \max \bigg\{ \dfrac{\kappa^{2\r-2}}{2} \bigg( \varepsilon - \dfrac{1}{\kappa}  \bigg)^{-1}, 
k_{0} \big( \kappa^{\r-1} + \varepsilon \big) \bigg\}.
\]  
Let $\a_{1}, \a_{2}, \dots, \a_{\r}$ be non-negative integers satisfying
$\a_{1} < \a_{2} < \cdots < \a_{\r}$ and $\a_{\r} \ge V(\r)$.  
Then there exists an integer $k \in [k_{0},\a_{\r}/(1-\varepsilon))$ such that 
$\a_{j} \mmod k$ is in $[0,\varepsilon k) \cup ((1-\varepsilon) k,k)$
for each $j \in \{1,2,\dots,\r\}$.
\end{lemma}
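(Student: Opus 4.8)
The plan is to prove Lemma~\ref{ffklemchapt10} by induction on $\r$, building the modulus $k$ one residue at a time.  The base case $\r = 1$ is immediate: any $k$ with $k_{0} \le k < \a_{1}/(1-\varepsilon)$ works if $\a_{1} \ge V(1) \ge k_{0}/(1-\varepsilon)$ (here one checks $V(1) = \max\{(\varepsilon - 1/\kappa)^{-1}/2, k_{0}(1 + \varepsilon)\}$ is large enough, using $\varepsilon < 1$), since $\a_{1} \mmod k = \a_{1}$ once $k > \a_{1}$, but more to the point we can take any integer $k$ in a nonempty interval below $\a_{1}$ and the condition $\a_{1} \mmod k \in [0,\varepsilon k) \cup ((1-\varepsilon)k, k)$ is what we must arrange.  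Actually the cleanest base case is to take $k = \lfloor \a_{1}/\varepsilon\rfloor + 1 \le \a_{1}/(1-\varepsilon)$ when $\a_1$ is large, giving $\a_1 \mmod k = \a_1 < \varepsilon k$.

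For the inductive step, assume the statement for $\r - 1$ and suppose $\a_{1} < \cdots < \a_{\r}$ with $\a_{\r} \ge V(\r)$.  The strategy is a pigeonhole/averaging argument over a range of candidate moduli.  Consider moduli $k$ near $\a_{\r}$, say in an interval $I = [\a_{\r}/\kappa^{?}, \a_{\r}/(1-\varepsilon))$ chosen so that for $k \in I$ the largest element $\a_{\r}$ satisfies $\a_{\r} \mmod k$ automatically in the allowed set (because $\a_{\r}/k$ lies between $1-\varepsilon$ and something like $\kappa$, so the fractional part $\{\a_{\r}/k\}$ is controlled).  More precisely I would look at the quantity $k \mapsto \Vert \a_{\r}/k \Vert$ and note that as $k$ ranges over an interval of the form $[\a_{\r}/(m+\varepsilon), \a_{\r}/(m - \varepsilon)]$ for integer $m$, we get $\a_{\r} \equiv v'_{\r} \pmod k$ with $|v'_\r| < \varepsilon k$.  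Within such an interval we then apply the induction hypothesis to $\a_{1}, \ldots, \a_{\r-1}$: rescaling, these have largest element $\a_{\r-1} < \a_{\r}$, and provided the interval is long enough — length $\gtrsim \a_{\r-1}/(1-\varepsilon) - k_0$ worth of room, which is where the factor $\kappa^{\r-1}$ accumulates — the induction hypothesis produces a $k$ in that sub-interval handling $\a_1, \ldots, \a_{\r-1}$, and by construction it also handles $\a_\r$.  The bookkeeping that makes the intervals nest properly is exactly what forces the doubly-exponential bound $\kappa^{2\r - 2}$ and the term $k_0(\kappa^{\r-1} + \varepsilon)$: at each level we shrink by a factor of $\kappa$, and we need the innermost interval to still have length exceeding roughly $k_0$, while the $\kappa^{2\r-2}$ comes from needing $\a_\r$ large enough that the interval $[\a_\r/(m+\varepsilon), \a_\r/(m-\varepsilon)]$ with $m \le \kappa$ contains an integer at all, i.e. has length $> 1$, which needs $\a_\r \gtrsim \kappa^2/(2\varepsilon - 2/\kappa) \cdot (\text{something}^{\r-2})$.

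The main obstacle is the careful choice of the nested intervals and verifying the length estimates: one must pick, for the step from $\r-1$ to $\r$, an integer $m$ (a candidate for $\lfloor \a_\r/k\rfloor$) with $1 \le m \le \kappa - 1$ such that the interval $J_m = (\a_\r/(m+\varepsilon), \a_\r/(m-\varepsilon))$ has length at least $V(\r-1)/(1-\varepsilon) \cdot (\text{scaling})$ — and crucially one needs to guarantee such an $m$ exists, which is a covering argument: the union over $m$ of these intervals covers $[\,\a_\r/\kappa, \a_\r/(1-\varepsilon))$ up to the gaps $(\a_\r/(m+\varepsilon), \a_\r/(m+1-\varepsilon))$... wait, actually since $\kappa = \lfloor 1/\varepsilon\rfloor + 1 > 1/\varepsilon$ we have $1/\kappa < \varepsilon$, so consecutive intervals $J_m$ and $J_{m+1}$ overlap (because $m + \varepsilon > m + 1 - \varepsilon$ fails... let me recheck: overlap needs $\a_\r/(m+\varepsilon) < \a_\r/(m+1-\varepsilon)$, i.e. $m + 1 - \varepsilon < m + \varepsilon$, i.e. $1 < 2\varepsilon$), so that only works for $\varepsilon > 1/2$.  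So the intervals do \emph{not} overlap in general, and the real content is a pigeonhole: since $\a_\r \ge V(\r)$, the long interval $[\a_\r/\kappa, \a_\r/(1-\varepsilon))$ has length $\ge \a_\r(1/(1-\varepsilon) - 1/\kappa)$, and it is partitioned into $\le \kappa$ pieces $J_1, \ldots, J_{\kappa-1}$ together with small "bad" gaps; by averaging, one of the $J_m$ has length at least (total good length)$/\kappa \ge \a_\r(\varepsilon - 1/\kappa)/\kappa$ roughly, and one checks $V(\r)$ is calibrated so that this exceeds $\kappa \cdot V(\r-1)/(1-\varepsilon)$ (or the analogous rescaled requirement), letting the induction go through.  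Getting these constants to line up precisely — and confirming $V(\r)$ as defined is exactly what the recursion demands — is the one genuinely fiddly part; everything else is routine.
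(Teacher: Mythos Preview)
Your approach is genuinely different from the paper's and, as written, has a real gap.

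The paper does \emph{not} argue by induction on $\rho$. Instead it proves the result in one stroke via simultaneous Diophantine approximation: set $x_j = v_j/v_\rho$, apply the Dirichlet box principle in the $(\rho-1)$-dimensional torus to the points $(\{d'x_1\},\dots,\{d'x_{\rho-1}\})$ for $d' \in \{D,2D,\dots,(\kappa^{\rho-1}+1)D\}$, and extract an integer $d \in [D,\kappa^{\rho-1}D]$ with $\Vert d x_j\Vert \le 1/\kappa$ for every $j$ at once. Then any integer $k$ with $k/v_\rho$ in the single interval $\big(\tfrac{d+1/\kappa}{d(d+\varepsilon)},\tfrac{d-1/\kappa}{d(d-\varepsilon)}\big)$ works for all $v_j$ simultaneously, and the length estimate on that interval together with the bound $d\le \kappa^{\rho-1}D$ is exactly where $V(\rho)$ comes from. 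No recursion, no nesting of intervals.

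Your inductive scheme breaks at the step where you ``apply the induction hypothesis to $v_1,\dots,v_{\rho-1}$'' inside a chosen $J_m = (v_\rho/(m+\varepsilon), v_\rho/(m-\varepsilon))$. The hypothesis only promises a good $k$ somewhere in $[k_0',\,v_{\rho-1}/(1-\varepsilon))$; it gives no control over \emph{where}. To force that $k$ into $J_m$ you would need $[k_0',\,v_{\rho-1}/(1-\varepsilon)) \subseteq J_m$, but $|J_m| \approx 2\varepsilon v_\rho/m^2$ while the interval you must contain has length essentially $v_{\rho-1}/(1-\varepsilon) - k_0'$. When $v_{\rho-1}$ is comparable to $v_\rho$ (say $v_{\rho-1} = v_\rho/2$ with $\varepsilon$ small), no admissible $m\ge 1$ makes $J_m$ long enough, and the step fails. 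The pigeonhole-over-$m$ averaging you sketch does not rescue this, because the total ``good'' length $\sum_m |J_m|$ is of order $\varepsilon v_\rho \log \kappa$, still potentially much smaller than $v_{\rho-1}$. Handling the $v_j$ one at a time is precisely what simultaneous approximation is designed to avoid.

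A smaller issue: your base case is also off. Taking $k = \lfloor v_1/\varepsilon\rfloor + 1$ gives $k > v_1/\varepsilon$, which exceeds $v_1/(1-\varepsilon)$ whenever $\varepsilon \le 1/2$. The paper simply takes $k = v_1$ (so $v_1 \bmod k = 0$), which lies in $[k_0, v_1/(1-\varepsilon))$ because $v_1 \ge V(1) \ge k_0$.
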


\begin{proof}
For $\r = 1$, the conditions imply $\a_{\r} \ge k_{0}$ and we can take $k = \a_{\r}$.  
We consider now the case that $\r > 1$.  
Let $D$ be an integer with
\[
1 \le D \le \dfrac{\sqrt{2 \a_{\r}}}{\kappa^{\r-1}} \,\bigg( \varepsilon - \dfrac{1}{\kappa} \bigg)^{1/2}.
\]
The condition $\a_{\r} \ge V(\r)$ implies that such an integer $D$ exists.  
We will show that one may take 
\begin{equation}\label{ffkpap2}
k \in \bigg( \dfrac{\a_{\r}}{\kappa^{\r-1}D + \varepsilon}, \dfrac{\a_{\r}}{D - \varepsilon} \bigg).
\end{equation}
In particular, the lemma will follow by choosing $D = 1$.
Note that the conditions in the lemma, $D = 1$ and $\r > 1$ imply that the interval in \eqref{ffkpap2} has length $> 1$, 
so such a $k$ exists.

Let $x_{j} = \a_{j}/\a_{\r}$ for $j \in \{1,2,\dots,\r\}$.  We want to show that
there is an integer $k$ satisfying \eqref{ffkpap2} and integers $\c_{1}, \c_{2}, \dots, \c_{\r}$ 
such that
\begin{equation}\label{ffkpap3}
|\a_{j} - \c_{j} k| < \varepsilon k \qquad \text{ for } 1 \le j \le \r. 
\end{equation}

We explain next how the
Dirichlet box principle implies that there is an integer $d$ satisfying 
$D \le d \le \kappa^{\r-1}D$, $D|d$ and 
\begin{equation}\label{ffkpap4}
\Vert d x_{j} \Vert \le \dfrac{1}{\kappa} \qquad \text{ for } 1 \le j \le \r-1. 
\end{equation}
For each $d' \in \{ D, 2D, 3D, \dots, (\kappa^{\r-1}+1)D \}$, we define the
point 
\[
P(d') = \big( \{ d'x_{1} \}  , \{ d'x_{2} \}, \dots, \{ d'x_{\r-1} \} \big).
\] 
Each such point has its coordinates in the interval $[0,1)$.  
For each of the $\kappa^{\r-1}$ choices of $u_{j} \in \{ 0, 1, \ldots, \kappa -1 \}$, where $1 \le j \le \r-1$, 
we consider the cube
\begin{align*}
\mathcal C(u_{1}, \dots, u_{\r-1}) = 
\{ (t_{1}, t_{2}, \dots, t_{\r-1}) : u_{j}/\kappa \le t_{j} < &(u_{j}+1)/\kappa \\ &\text{ for } 1 \le j \le \r-1  \}.
\end{align*}
The Dirichlet box principle
implies that there are $d_{1}'$ and $d_{2}'$ in 
\[
\{ D, 2D, \ldots, (\kappa^{\r-1}+1)D \}
\]
with $d_{1}' > d_{2}'$
such that the points $P(d_{1}')$ and $P(d_{2}')$ are both in the same cube
$C(u_{1}, \dots, u_{\r-1})$.  Set 
\[
d = d_{1}'-d_{2}' \in \{ D, 2D, \ldots, \kappa^{\r-1}D \}.
\]
For every $j \in \{ 1, 2, \dots, \r-1 \}$, we obtain
\[
d x_{j} = d_{1}'x_{j} - d_{2}'x_{j} 
= \lfloor d_{1}'x_{j} \rfloor - \lfloor d_{2}'x_{j} \rfloor 
+ \{ d_{1}'x_{j} \} - \{ d_{2}'x_{j} \}.
\]
It follows that each such $d x_{j}$ is within 
\[
|\{ d_{1}'x_{j} \} - \{ d_{2}'x_{j} \}| \le \dfrac{1}{\kappa}
\]
of the integer $\lfloor d_{1}'x_{j} \rfloor - \lfloor d_{2}'x_{j} \rfloor$.  
This establishes \eqref{ffkpap4}.  
Clearly, \eqref{ffkpap4} holds with $j = \r$ as well.  
Observe that the upper bound on $D$ above implies that 
\[
d \le \sqrt{2 \a_{\r}} \,\bigg( \varepsilon - \dfrac{1}{\kappa} \bigg)^{1/2}.
\]
For $1 \le j \le \r$, take $\c_{j}$ to be the nearest
integer to $d x_{j}$.  

For the moment, suppose $\c_{j} \ne 0$ (so that $\c_{j} \ge 1$) for each 
$j \in \{1,2,\dots,\r\}$.  Then \eqref{ffkpap3} follows provided
\[
\dfrac{k}{\a_{\r}} \in \bigcap_{1 \le j \le \r} 
\bigg( \dfrac{x_{j}}{\c_{j} + \varepsilon}, \dfrac{x_{j}}{\c_{j} - \varepsilon} \bigg).
\]
For each $j \in \{1,2,\dots,\r\}$, since $\c_{j} \le d$, we deduce from \eqref{ffkpap4} that
\[
\dfrac{d+(1/\kappa)}{d+\varepsilon} \ge \dfrac{\c_{j}+(1/\kappa)}{\c_{j}+\varepsilon} 
\ge \dfrac{d x_{j}}{\c_{j}+\varepsilon}
\]
and
\[
\dfrac{d-(1/\kappa)}{d-\varepsilon} \le \dfrac{\c_{j}-(1/\kappa)}{\c_{j}-\varepsilon} 
\le \dfrac{d x_{j}}{\c_{j}-\varepsilon}.
\]
Hence, \eqref{ffkpap3} holds provided
\begin{equation}\label{ffkpap5}
\dfrac{k}{\a_{\r}} \in 
\bigg( \dfrac{d+(1/\kappa)}{d(d+\varepsilon)}, \dfrac{d-(1/\kappa)}{d(d-\varepsilon)} \bigg).
\end{equation}
The length of the interval on the right is 
\[
\dfrac{2}{(d^{2}-\varepsilon^{2})} \bigg( \varepsilon - \dfrac{1}{\kappa} \bigg) 
> \dfrac{2}{d^{2}} \bigg( \varepsilon - \dfrac{1}{\kappa} \bigg) \ge \dfrac{1}{\a_{\r}}
\]
so that $k$ exists satisfying \eqref{ffkpap5}.  Observe that \eqref{ffkpap5} and the definition of $d$
imply \eqref{ffkpap2} holds. 

Now, suppose some $\c_{j} = 0$ with $j \in \{1,2,\dots,\r\}$.  We again choose
$k$ so that \eqref{ffkpap5} holds.  For each $\c_{j} \ne 0$, the above argument gives 
$|\a_{j}-\c_{j}k| < \varepsilon k$ as in \eqref{ffkpap3}.  On the other hand, if $\c_{j} = 0$, then
\eqref{ffkpap4} and the definitions of $\c_{j}$, $x_{j}$, and $k$ imply
\[
\kappa d \a_{j} \le \a_{\r} < k 
\dfrac{d \big( d + \varepsilon \big)}{d + (1/\kappa)}
\le \kappa \varepsilon d k.
\]
Hence, \eqref{ffkpap3} holds for such $\c_{j}$ as well, completing the proof.
\end{proof}

\begin{proof}[Proof of Theorem~\ref{mainlemma}.]
Suppose that the non-reciprocal part of $F(x)$ is reducible.  
As described in detail in the proof of Theorem 2 of \cite{FFK}, there exist non-reciprocal polynomials 
$u(x)$ and $v(x)$ in $\mathbb Z[x]$ such that $F(x) = u(x) v(x)$.
Let $W(x) = u(x) \tilde{v}(x)$.    
The polynomials $F(x)$, $\widetilde{F}(x)$, $W(x)$, and $\widetilde{W}(x)$
are distinct polynomials of degree $d_{r}$ with any two having greatest
common divisor of degree $< d_{r}$.  
Note that
\begin{equation}\label{ffkpap8}
F(x) \widetilde{F}(x) = u(x) v(x) \tilde{u}(x) \tilde{v}(x) = W(x) \widetilde{W}(x).
\end{equation}
By comparing the coefficient of $x^{d_{r}}$ on the left and right sides of \eqref{ffkpap8}, 
we deduce that $\Vert F \Vert^{2} = \Vert W \Vert^{2}$. 

We write
$W(x)$ in the form $W(x) = \sum_{j=0}^{s} b_{j} x^{e_{j}}$ where 
the $b_{j}$ are non-zero and $0 = e_{0} < e_{1} < \cdots < e_{s} = d_{r}$.
Then $\Vert W \Vert^{2} = \Vert F \Vert^{2}$ implies 
$s \le \Vert F \Vert^{2} - 1$.  
Consider the set 
\begin{align*}
T = \{ d_{1}, d_{2}, &\dots, d_{r} \} 
\cup \{ d_{r} - d_{1}, d_{r} - d_{2}, \dots, d_{r} - d_{r-1} \} \\
&\cup \{ e_{1}, e_{2}, \dots, e_{s-1} \} 
\cup \{ e_{s} - e_{1}, e_{s} - e_{2}, \dots, e_{s} - e_{s-1} \}.
\end{align*}
Observe that $|T| \le 2\,\Vert F \Vert^{2} + 2 r - 5$.  
We use the lower bound on $d_{r} = \deg F$ in the statement of the
theorem together with Lemma \ref{ffklemchapt10} to deduce that
there exists an integer $k \in [k_{0},d_{r}/(1-\varepsilon))$ such that 
$t \mmod k$ is in $[0, \varepsilon k) \cup ((1-\varepsilon) k,k)$ for every $t \in T$.
Fix such an integer $k$.  
 
Define $\overline{d}_{j}$ and $\ell_{j}$ as in the theorem, and define
$\overline{e}_{j}$ and $m_{j}$, for $0 \le j \le s$, similarly by
\[
\overline{e}_{j} = (e_{j} + \lfloor \varepsilon k \rfloor) \mmod k \qquad \text{and} \qquad
e_{j} + \lfloor \varepsilon k \rfloor = k m_{j} + \overline{e}_{j}.
\]
For $0 \le j \le r$, we also define
$\overline{d}'_{j}$ and $\ell'_{j}$ by
\[
\overline{d}'_{j} = (d_{r}-d_{j} + \lfloor \varepsilon k \rfloor) \mmod k, \qquad \text{and} \qquad
d_{r}-d_{j} + \lfloor \varepsilon k \rfloor = k \ell'_{j} + \overline{d}'_{j};
\]
and, for $0 \le j \le s$, we also define 
$\overline{e}'_{j}$ and  $m'_{j}$ by 
\[
\overline{e}'_{j} = (e_{s}-e_{j} + \lfloor \varepsilon k \rfloor) \mmod k \qquad \text{and} \qquad
e_{s}-e_{j} + \lfloor \varepsilon k \rfloor = k m'_{j} + \overline{e}'_{j}.
\]
Since $t \mmod k$ is in $[0,\varepsilon k) \cup ((1-\varepsilon) k,k)$ for every $t \in T$, we see that
the numbers $\overline{d}_{j}$ and $\overline{d}'_{j}$, for $0 \le j \le r$,
and the numbers $\overline{e}_{j}$ and $\overline{e}'_{j}$, for $0 \le j \le s$,
all lie in the interval $[0, 2\varepsilon k)$. 
Define $G_{1}(x,y) = G(x,y)$ (as in the statement of the theorem), 
\[
G_{2}(x,y) = \sum_{j=0}^{r} a_{j} x^{\overline{d}'_{j}} y^{\ell'_{j}},
\qquad H_{1}(x,y) = \sum_{j=0}^{s} b_{j} x^{\overline{e}_{j}} y^{m_{j}},
\]
and
\[
H_{2}(x,y) = \sum_{j=0}^{s} b_{j} x^{\overline{e}'_{j}} y^{m'_{j}}.
\]
Hence,
\begin{gather}\label{ffkpap10}
\begin{split}
G_{1}(x,x^{k}) = x^{\lfloor \varepsilon k \rfloor} F(x), \qquad
G_{2}(x,x^{k}) = x^{\lfloor \varepsilon k \rfloor} \widetilde{F}(x), \phantom{ssld} \\[5pt]
H_{1}(x,x^{k}) = x^{\lfloor \varepsilon k \rfloor} W(x) \qquad \text{and} \qquad 
H_{2}(x,x^{k}) = x^{\lfloor \varepsilon k \rfloor} \widetilde{W}(x). 
\end{split}
\end{gather} 

Corresponding to \eqref{ffkpap8}, we establish next that
\begin{equation}\label{ffkpap9}
G_{1}(x,y) G_{2}(x,y) = H_{1}(x,y) H_{2}(x,y).
\end{equation}
Recall that we are working with the condition that $\varepsilon \le 1/4$ so that $2\varepsilon k \le k/2$. 
Expanding the left-hand side of \eqref{ffkpap9}, we obtain
an expression of the form $\sum_{j=0}^{J} g_{j}(x) y^{j}$ where possibly
some $g_{j}(x)$ are $0$ but otherwise $\deg g_{j} < k$ for each $j$.  
Since
\[
x^{2\lfloor \varepsilon k \rfloor} F(x) \widetilde{F}(x) = G_{1}(x,x^{k}) G_{2}(x,x^{k})
= \sum_{j=0}^{J} g_{j}(x) x^{kj},
\]
we see that the terms in each $g_{j}(x) x^{kj}$ correspond precisely to the 
terms in the expansion of $x^{2\lfloor \varepsilon k \rfloor} F(x) \widetilde{F}(x)$ having degrees 
in the interval $[kj,k(j+1))$.
Similarly, writing the right-hand side of \eqref{ffkpap9} in the form
$\sum_{j=0}^{J'} h_{j}(x) y^{j}$, we get each $h_{j}(x)$ is either $0$ or
has degree $< k$
and the terms in $h_{j}(x) x^{kj}$ correspond to the 
terms in the expansion of $x^{2\lfloor \varepsilon k \rfloor} W(x) \widetilde{W}(x)$ having degrees in the interval
$[kj,k(j+1))$.  
We see now that \eqref{ffkpap9} is a consequence of \eqref{ffkpap8}.

From \eqref{ffkpap8} and \eqref{ffkpap10}, given any two of $G_{1}(x,y)$, $G_{2}(x,y)$, $H_{1}(x,y)$ 
and $H_{2}(x,y)$, each one will have a factor different from $x$ that
does not divide the other.  The theorem now follows from \eqref{ffkpap9}.  
\end{proof}


\section{Reduction to polynomials in one variable}\label{multitoonesect}
In this section, we will be interested in obtaining information about the factorization of $F(x,x^{n})$ 
in $\mathbb Z[x]$ given $F(x,y)$ as in \eqref{generalpoly}.  As suggested by the previous section, 
the methods we employ here give us results about the factorization of the non-reciprocal part of $F(x,x^{n})$.  

We restrict ourselves to the case that
\[
\gcd{}_{\mathbb Z[x]}\big(f_{0}(x), f_{1}(x), \ldots, f_{r}(x) \big) = 1,
\quad
f_{0}(0) \ne 0,
\quad \text{and} \quad
f_{r}(x) \ne 0.
\]
The above restrictions are minor; in particular, we can simply factor out an element of $\mathbb Z[x]$ from 
$F(x,y)$ if either of the first two conditions do not hold and proceed from there.
To clarify, we allow for the possibility that $f_{j}(0) = 0$ for some or all $j \in \{ 1, 2, \ldots, r \}$ and
$f_{j}(x) = 0$ for some or all $j \in\{ 1, 2, \ldots, r-1 \}$.

Let 
\begin{equation}\label{fbasiceq}
F_{0}(x) = F(x,x^{n}) = \sum_{j=0}^{r} f_{j}(x) x^{n j},
\end{equation}
where we consider here
\begin{equation}\label{nboundtemp}
n > \max_{0 \le j \le r-1}\{ \deg f_{j} \}.
\end{equation}
For the purposes of using Theorem \ref{mainlemma}, we write
\[
F_{0}(x)=\displaystyle\sum_{j=0}^{m} a_{j} x^{d_j},
\]
where $0 = d_{0} < d_{1} < \cdots < d_{m}$, 
the numbers $n, 2n, \ldots, rn$ are included in the set $\{ d_{0}, d_{1}, \dots, d_{m} \}$, and 
possibly the coefficients $a_{j}$ corresponding to $x^{n}, x^{2n}, \ldots, x^{rn}$ are $0$ but otherwise $a_{j} \ne 0$.
With this understanding, 
there exist non-negative integers $\rho_{0}, \ldots, \rho_{r}$ such that 
\begin{gather*}
f_{0}(x)= \sum_{j=0}^{\rho_{0}} a_jx^{d_j}, \quad 
f_{1}(x)= \sum_{j=\rho_{0}+1}^{\rho_{1}} a_{j} x^{d_{j}-n}, \\[5pt]
\quad \ldots,\quad 
f_{r}(x)=\sum_{j=\rho_{r-1}+1}^{\rho_{r}} a_{j} x^{d_{j}-rn},
\end{gather*}
where 
\[
d_{0} = 0, \ 
d_{\rho_{0}+1} = n, \ 
d_{\rho_{1}+1} = 2n, \ 
\ldots, \ 
d_{\rho_{r-1}+1} = rn.
\]
As noted, some or all of the coefficients
$a_{\rho_{0}+1}, a_{\rho_{1}+1}, \ldots, a_{\rho_{r-1}+1}$ may be $0$.  

Set
\[
k_0=2\max_{0 \le j \le r}\{\deg(f_{j})\} 
\quad \text{and} \quad
\varepsilon = \dfrac{1}{2r+2} \le \dfrac{1}{4}.
\]
Choose $n$ sufficiently large so that the inequality on $\deg F_{0}$ holds in Theorem~\ref{mainlemma}.
Suppose further that the non-reciprocal part of $F_{0}(x)$ is reducible. 
Fix $k$ as in Theorem~\ref{mainlemma}. 
For $j\in\{0,\ldots, r\}$, define $\overline{d_j}$ and $\ell_j$ as in Theorem~\ref{mainlemma}.  
Since $k \ge 2\deg(f_{0})$, we have that $d_j+\lfloor \varepsilon k \rfloor < k$ for 
$j\in\{0,1,\ldots, \rho_{0}\}$.  
Hence, each of $\ell_{0}, \ell_{1},$ 
$\ldots, \ell_{\rho_0}$ is 0.  
We show next that, for each $i \in \{ 1, 2, \ldots, r \}$, the numbers 
$\ell_{\rho_{i-1}+1},\ell_{\rho_{i-1}+2},\ldots,\ell_{\rho_{i}}$ are all equal.  
Assume that $\ell_{s} \ne \ell_{\rho_{i}}$ for some 
$s\in\{\rho_{i-1}+1,\rho_{i-1}+2,\ldots, \rho_{i}-1\}$.  
The ordering on the $d_j$ and the definition of the $\ell_j$ imply that 
$\ell_{s} < \ell_{\rho_{i}}$.  By the way $\overline{d_j}$ is defined, we know that 
each $\overline{d_j}$ is in $[0,2\varepsilon k) \subseteq [0,k/2)$.  Hence, by the definition of 
$\ell_j$, we obtain 
\[
d_{\rho_{i}}-d_{s}=k(\ell_{\rho_{i}}-\ell_{s})
+\big(\overline{d_{\rho_{i}}}-\overline{d_{s}}\big)>k-\frac{k}{2}=\frac{k}{2}\geq \deg(f_{i}).
\]
On the other hand, from the definition of the $\rho_{j}$, 
we see that
\[
d_{\rho_{i}}-d_{s} \le d_{\rho_{i}}-d_{\rho_{i-1}+1} \le \deg(f_{i}).
\]
This apparent contradiction implies then that
$\ell_{\rho_{i-1}+1},\ldots,\ell_{\rho_{i}}$ are all equal.

Since the $d_{j}$ are increasing, the equality of the numbers 
$\ell_{\rho_{i-1}+1},\ldots,\ell_{\rho_{i}}$
implies that 
\[
\overline{d_{\rho_{i-1}+1}} = 
\min \big\{\, \overline{d_{\rho_{i-1}+1}}, \overline{d_{\rho_{i-1}+2}}, \ldots, 
\overline{d_{\rho_{i}}}  \,\big\}
\quad \text{for } 1 \le i \le r.
\]
Similarly,
\[
\overline{d_{0}} = \min \big\{\, \overline{d_{0}}, \overline{d_{1}}, 
\ldots, \overline{d_{\rho_{0}}}  \,\big\}
= \lfloor \varepsilon k \rfloor.
\]
Set
\[
M_{0} = \min_{0 \le j \le m} \big\{ \overline{d_j} \big\} 
= \min \{\, \overline{d_{0}}, \overline{d_{\rho_{0}+1}}, \ldots,
\overline{d_{\rho_{r-1}+1}} \,\}.
\]
In the notation of Theorem~\ref{mainlemma}, $M \ge M_{0}$.  Define
\[
t_{i} = \overline{d_{\rho_{i-1}+1}} - M_{0} \ \ \ 
\text{(for $1 \le i \le r$),} \quad \text{and} \quad
t_{0} = \overline{d_{0}} - M_{0} = \lfloor \varepsilon k \rfloor - M_{0}.
\]
Thus, $t_{0}, t_{1}, \ldots, t_{r}$ are non-negative integers $< 2 \varepsilon k$ with at least one 
equal to $0$.

Let $G(x,y)$ be defined as in Theorem~\ref{mainlemma}.  Thus,
\begin{equation}\label{gxyequat}
\begin{aligned}
G(x,y) &= \sum_{j=0}^{m} a_{j} x^{\overline{d_j}} y^{\ell_j} \\
&= \sum_{j=0}^{\rho_{0}} a_{j} x^{\overline{d_j}}
+ \sum_{j=\rho_{0}+1}^{\rho_{1}} a_{j} x^{\overline{d_j}} y^{\ell_{\rho_{1}}}
+ \cdots
+ \sum_{j=\rho_{r-1}+1}^{\rho_{r}} a_{j} x^{\overline{d_j}} y^{\ell_{\rho_{r}}}.
\end{aligned}
\end{equation}
For $0 \le j \le \rho_{0}$, we have
\[
\overline{d_j} = d_{j} + \lfloor \varepsilon k \rfloor = d_{j} +t_{0}+ M_{0}.
\]
For $1 \le i \le r$ and $\rho_{i-1}+1 \le j \le \rho_{i}$, we have
\begin{equation}\label{djbarequation}
\begin{aligned}
\overline{d_j} &= d_{j} + \lfloor \varepsilon k \rfloor - k \ell_{\rho_{i}}
= d_{j} + \overline{d_{\rho_{i-1}+1}} - d_{\rho_{i-1}+1} \\[5pt]
&= d_{j} - d_{\rho_{i-1}+1} + t_{i}+ M_{0}
= d_{j} - i n + t_{i}+ M_{0}.
\end{aligned}
\end{equation}
Observe that, for $1 \le i \le r$, we obtain from \eqref{djbarequation} and the definition of $t_{i}$ that
\begin{equation}\label{eq1djbardeduction}
i n + \lfloor \varepsilon k\rfloor = k \ell_{\rho_{i}} + t_{i} + M_{0}
\qquad \text{with } t_{i}+M_{0} \in [0,2 \varepsilon k).
\end{equation}
As $\ell_{\rho_{0}} = 0$ and $t_{0}+M_{0} = \lfloor \varepsilon k \rfloor$, we see that \eqref{eq1djbardeduction} also holds for $i = 0$.  
Substituting \eqref{djbarequation} and then \eqref{eq1djbardeduction} into \eqref{gxyequat}, we deduce
\begin{align*}
G(x,x^{k}) 
&= f_{0}(x) x^{t_{0}+ M_{0}}
+ f_{1}(x) x^{t_{1}+ M_{0}} x^{k \ell_{\rho_{1}}}
+ \cdots
+ f_{r}(x) x^{t_{r}+ M_{0}} x^{k \ell_{\rho_{r}}} \\[5pt]
&=x^{\lfloor \varepsilon k\rfloor} F_{0}(x).
\end{align*}

Fix $i \in \{1, \ldots, r \}$.  From \eqref{eq1djbardeduction}, we obtain 
and
\begin{equation}
\label{eq2djbardeduction}
(i-1)n + \lfloor \varepsilon k\rfloor = k \ell_{\rho_{i-1}} + t_{i-1} + M_{0} \qquad \text{with}
\qquad t_{i-1} + M_{0}  \in [0,2 \varepsilon k).
\end{equation}
By multiplying equation \eqref{eq2djbardeduction} by $i$ and \eqref{eq1djbardeduction} by $i-1$ and taking a difference, we deduce 
\begin{equation}
\label{eq3}
\lfloor \varepsilon k\rfloor=k(i \ell_{\rho_{i-1}} - (i-1)\ell_{\rho_{i}}) + d'_{i}
\end{equation}
where
\[
d'_{i} = i(t_{i-1} + M_{0}) - (i-1)(t_{i} + M_{0})
\in \big({-2}(r-1)\varepsilon k, 2r\varepsilon k\big).
\]
As $\varepsilon = 1/(2r+2)$, we deduce that $k(i \ell_{\rho_{i-1}} - (i-1)\ell_{\rho_{i}})$ is a multiple of $k$ in the interval $(-k,k)$,
and therefore $0$.  Thus, $i \ell_{\rho_{i-1}} = (i-1)\ell_{\rho_{i}}$ for each $i \in \{1, \ldots, r \}$.  Considering $i = 1, 2, \ldots, r$ 
in turn, we deduce
\[
\ell_{\rho_{0}} = 0 \quad \text{and} \quad
\ell_{\rho_{i}} = i \ell_{\rho_{1}} \quad \text{for } 1 \le i \le r.
\]
Define $\ell = \ell_{\rho_{1}}$.  
Recall that after the statement of Theorem~\ref{mainlemma}, we showed that some $\ell_{j} \ge 1$.  Hence, $\ell \ge 1$.  

From $i \ell_{\rho_{i-1}} = (i-1)\ell_{\rho_{i}}$, \eqref{eq3} and the definition of $d'_{i}$, we deduce
\begin{equation}\label{mzeroteq}
i t_{i-1} - (i-1)t_{i} + M_{0} = \lfloor \varepsilon k\rfloor
\quad 
\text{for } 1 \le i \le r.
\end{equation}
Taking $i = j+1$ and $i = j$ in this equation, subtracting the first from the second, and dividing by $j$, we obtain
\[
2t_{j} = t_{j+1} + t_{j-1}
\quad
\text{ for } 1 \le j \le r-1.
\]
In other words, $t_{j}$ is the average of $ t_{j-1}$ and $t_{j+1}$ for $1 \le j \le r-1$.  
Hence, the minimum value of $t_{i}$ occurs for $i = 0$ or $i = r$.
Recall that the $t_{i}$ are non-negative with at least one equal to $0$.  
Therefore, either $t_{0} = 0$ or $t_{r} = 0$.  Define
\[
t = 
\begin{cases}
t_{1} &\text{if } t_{0} = 0 \\
t_{r-1} &\text{if } t_{r} = 0.
\end{cases}
\]
Since $t_{j}$ is the average of $ t_{j-1}$ and $t_{j+1}$ for $1 \le j \le r-1$, we deduce that either
$t_{i} = i t$ for $0 \le i \le r$ or $t_{i} = (r-i) t$ for $0 \le i \le r$.  

Combining the above, we obtain that one of
\begin{equation}\label{eqforegs1}
x^{M_{0}} \big( f_{0}(x) 
+ f_{1}(x) x^{t} x^{k \ell}
+ \cdots
+ f_{r-1}(x) x^{(r-1)t} x^{(r-1)k \ell}
+ f_{r}(x) x^{rt} x^{r k \ell} \big)
\end{equation}
and
\begin{equation}\label{eqforegs2}
x^{M_{0}} \big( f_{0}(x) x^{rt}
+ f_{1}(x) x^{(r-1)t} x^{k \ell}
+ \cdots
+ f_{r-1}(x) x^{t} x^{(r-1)k \ell}
+ f_{r}(x) x^{r k \ell} \big)
\end{equation}
is equal to $x^{\lfloor \varepsilon k\rfloor} F_{0}(x)$.  
In the first case above, \eqref{mzeroteq} with $i=1$ implies $M_{0} = \lfloor \varepsilon k\rfloor$.  
In the second case, \eqref{mzeroteq} with $i=r$ implies $M_{0} = \lfloor \varepsilon k\rfloor - rt$.  

We focus now on $G(x,y)$.  We have that $G(x,y)$ is either
\[
x^{M_{0}} \big( f_{0}(x) 
+ f_{1}(x) x^{t} y^{\ell}
+ \cdots
+ f_{r-1}(x) x^{(r-1)t} y^{(r-1)\ell}
+ f_{r}(x) x^{rt} y^{r \ell} \big)
\]
or
\[
x^{M_{0}} \big( f_{0}(x) x^{rt}
+ f_{1}(x) x^{(r-1)t} y^{\ell}
+ \cdots
+ f_{r-1}(x) x^{t} y^{(r-1)\ell}
+ f_{r}(x) y^{r \ell} \big).
\]
We deduce that $M$ in Theorem \ref{mainlemma} is $M_{0}$ in the first case, but the value of $M$ depends on whether $f_{r}(0) = 0$ in the second case. 
If $f_{r}(0) \ne 0$, then $M = M_{0}$ in this case as well.  

In any case, to deduce the irreducibility of the the non-reciprocal part of $F(x,x^{n})$ for $n$ sufficiently large, Theorem~\ref{mainlemma} implies that it suffices to show that both
\begin{equation}\label{polyoneeq}
f_{0}(x) 
+ f_{1}(x) x^{t} y^{\ell}
+ \cdots
+ f_{r-1}(x) x^{(r-1)t} y^{(r-1)\ell}
+ f_{r}(x) x^{rt} y^{r \ell}
\end{equation}
and
\begin{equation}\label{polytwoeq}
f_{0}(x) x^{rt}
+ f_{1}(x) x^{(r-1)t} y^{\ell}
+ \cdots
+ f_{r-1}(x) x^{t} y^{(r-1)\ell}
+ f_{r}(x) y^{r \ell},
\end{equation}
other than possibly a factor of a power of $x$, are irreducible as a polynomial in two variables for every non-negative integer $t$.  

We summarize this as follows.

\begin{theorem}\label{newthmtouse}
Let $F(x,y)$ be as in \eqref{generalpoly} with 
\[
\gcd{}_{\mathbb Z[x]}\big(f_{0}(x), f_{1}(x), \ldots, f_{r}(x) \big) = 1
\quad
f_{0}(0) \ne 0,
\quad \text{and} \quad
f_{r}(x) \ne 0.
\]
Let $n$ be sufficiently large.  Then the non-reciprocal part of $F(x,x^{n})$ is not reducible if, 
for every non-negative integer $t$ and every positive integer $\ell$, the polynomials
\eqref{polyoneeq} and \eqref{polytwoeq}
are a power of $x$ (possibly $x^{0}$) times an irreducible polynomial in $\mathbb Z[x,y]$.  
\end{theorem}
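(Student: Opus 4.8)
The plan is to prove the contrapositive. Fix $n$ large and suppose the non-reciprocal part of $F(x,x^{n})$ is reducible; I will produce, from this, a nontrivial factorization of one of the two-variable polynomials \eqref{polyoneeq}, \eqref{polytwoeq} after stripping off a power of $x$. Thus, if every one of those polynomials is (for each non-negative integer $t$ and each positive integer $\ell$) a power of $x$ times an irreducible element of $\mathbb{Z}[x,y]$, no such $n$ can exist, which is the assertion. The vehicle is Theorem~\ref{mainlemma}, so the first step is to arrange its hypotheses. Write $F_{0}(x) = F(x,x^{n}) = \sum_{j=0}^{m} a_{j} x^{d_{j}}$ with $0 = d_{0} < d_{1} < \cdots < d_{m}$, keeping the exponents $n, 2n, \dots, rn$ among the $d_{j}$ even when their coefficients vanish, and group the indices $j$ into the blocks coming from $f_{0}, f_{1}, \dots, f_{r}$, with $d_{\rho_{i-1}+1} = in$. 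Put $k_{0} = 2\max_{0\le j\le r}\deg f_{j}$ and $\varepsilon = 1/(2r+2)$. Because $\Vert F_{0}\Vert$ and the number of terms of $F_{0}$ are bounded purely in terms of the fixed $f_{j}$, while $\deg F_{0} = rn + \deg f_{r}$ grows with $n$, the lower bound on $\deg F_{0}$ required by Theorem~\ref{mainlemma} holds once $n$ is large. The theorem then hands us an integer $k \in [k_{0}, (\deg F_{0})/(1-\varepsilon))$ with each $d_{j} \bmod k$ in $[0,\varepsilon k) \cup ((1-\varepsilon)k, k)$ and with $x^{-M} G(x,y)$ reducible in $\mathbb{Z}[x,y]$, where $G(x,y) = \sum_{j} a_{j} x^{\overline{d_{j}}} y^{\ell_{j}}$ in the notation of Theorem~\ref{mainlemma}.

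The core of the argument is a structural analysis of $G$. First I would show that within each block the values $\ell_{j}$ all coincide, say with $\ell_{\rho_{i}}$: the spread $d_{\rho_{i}}-d_{\rho_{i-1}+1}$ of a block is at most $\deg f_{i} \le k/2$, while the $\overline{d_{j}}$ all lie in $[0, 2\varepsilon k) \subseteq [0, k/2)$, so two different multiples of $k$ cannot occur inside one block. Setting $M_{0} = \min_{j}\overline{d_{j}}$ and $t_{i} = \overline{d_{\rho_{i-1}+1}} - M_{0}$, one reads off $in + \lfloor \varepsilon k\rfloor = k\ell_{\rho_{i}} + (t_{i} + M_{0})$ with $t_{i}+M_{0} \in [0,2\varepsilon k)$. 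Taking the integer combinations of consecutive such relations that cancel the $n$-term, one obtains a multiple of $k$ whose size is controlled by small coefficients times $2\varepsilon k$; the choice $\varepsilon = 1/(2r+2)$ is exactly what keeps this in $(-k,k)$, forcing it to vanish. This gives $\ell_{\rho_{i}} = i\ell$ with $\ell := \ell_{\rho_{1}} \ge 1$ (the inequality coming from the remark, recorded after Theorem~\ref{mainlemma}, that some $\ell_{j} \ge 1$) and the relations $i t_{i-1} - (i-1) t_{i} + M_{0} = \lfloor \varepsilon k\rfloor$. Differencing the latter at consecutive indices shows $t_{j}$ is the average of $t_{j-1}$ and $t_{j+1}$, so $\min\{t_{0}, \dots, t_{r}\}$ is attained at an endpoint; since some $t_{i} = 0$, either $t_{0} = 0$ and $t_{i} = it$ for $t := t_{1}$, or $t_{r} = 0$ and $t_{i} = (r-i)t$ for $t := t_{r-1}$. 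In the first case $G(x,y)$ equals $x^{M_{0}}$ times the polynomial in \eqref{polyoneeq}, and in the second $x^{M_{0}}$ times the polynomial in \eqref{polytwoeq}.

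It remains to close the loop. In either case, if \eqref{polyoneeq} and \eqref{polytwoeq} are each a power of $x$ times an irreducible element of $\mathbb{Z}[x,y]$ for every non-negative $t$ and every positive $\ell$, then dividing $G(x,y)$ by the largest admissible power of $x$ yields an irreducible polynomial, contradicting the reducibility of $x^{-M} G(x,y)$ guaranteed by Theorem~\ref{mainlemma}(ii). (One should note that $M$ can exceed $M_{0}$ in the case $t_{r}=0$ when $f_{r}(0)=0$, but this affects only which power of $x$ is removed, not the irreducibility.) Hence for all sufficiently large $n$ the non-reciprocal part of $F(x,x^{n})$ is not reducible.

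The hard part is the combinatorial bookkeeping in the middle step: simultaneously tracking $\overline{d_{j}}$, $\ell_{j}$, $t_{i}$ and $M_{0}$, handling the two possible ``directions'' of the arithmetic progression $t_{0}, \dots, t_{r}$, and pinning down the precise roles of the bounds $k \ge 2\max_{j}\deg f_{j}$ (which confines each block) and $\varepsilon = 1/(2r+2)$ (which is tuned so that every modular ambiguity collapses; note $t_{r} = rt < 2\varepsilon k = k/(r+1)$, the source of the constraint $t < k/(r(r+1))$ appearing in Theorem~\ref{themainthm}). Setting up $F_{0}$, invoking Theorem~\ref{mainlemma}, and the final irreducibility deduction are routine.
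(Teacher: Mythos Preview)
Your proposal is correct and follows essentially the same approach as the paper: the same choice of $k_{0}=2\max_{j}\deg f_{j}$ and $\varepsilon=1/(2r+2)$, the same block argument forcing the $\ell_{j}$ to be constant within each block, the same cancellation of the $n$-term via the combination $i\cdot(\text{relation at }i-1)-(i-1)\cdot(\text{relation at }i)$ to obtain $\ell_{\rho_{i}}=i\ell$, and the same averaging identity $2t_{j}=t_{j-1}+t_{j+1}$ leading to the two cases $t_{0}=0$ or $t_{r}=0$. You also correctly flag the $M$ versus $M_{0}$ subtlety when $f_{r}(0)=0$, exactly as the paper does.
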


Theorem~\ref{newthmtouse} is close to what we stated for Theorem~\ref{themainthm} in the introduction.  
However, if we are not interested in establishing that $F(x,x^{n})$ is irreducible for all 
$n$ sufficiently large but rather for $n$ sufficiently large of a certain form, then 
Theorem~\ref{themainthm} is valuable and needs a little more justification.  
We begin by showing $n = k \ell \pm t$.  

First,  
consider $\eqref{polyoneeq}$.    
We recall the discussion after \eqref{eqforegs1} and \eqref{eqforegs2}.  
In the case of $\eqref{polyoneeq}$, we have $M_{0} = \lfloor \varepsilon k \rfloor$ in \eqref{eqforegs1}.  
The values of $t$ and $\ell$ were determined by
\[
t = t_{1} = \overline{d_{\rho_{0}+1}} - M_{0} = \overline{d_{\rho_{0}+1}} - \lfloor \varepsilon k \rfloor
\quad \text{and} \quad
\ell = \ell_{\rho_{1}} = \ell_{\rho_{0}+1}.
\]
Further, $d_{\rho_{0}+1} = n$.  From Theorem~\ref{mainlemma}, we deduce
\[
n + \lfloor \varepsilon k \rfloor
= d_{\rho_{0}+1} + \lfloor \varepsilon k \rfloor  
= k \ell_{\rho_{0}+1} + \overline{d_{\rho_{0}+1}}
= k \ell + t + \lfloor \varepsilon k \rfloor.
\]
Thus, $n = k \ell + t$.  In the notation of Theorem~\ref{mainlemma} and \eqref{fbasiceq}, 
$F_{0}(x) = F(x,x^{n})$ and the value of $n$ comes from the expression $f_{1}(x) x^{n}$
that one obtains by replacing $y$ with $x^{k}$ in $\eqref{polyoneeq}$.
In the case of $\eqref{polytwoeq}$, we have
\[
0 = t_{r} = \overline{d_{\rho_{r-1}+1}} - M_{0} = \overline{d_{\rho_{r-1}+1}} - \lfloor \varepsilon k \rfloor + rt
\quad \text{and} \quad
\ell_{\rho_{r}} = r \ell.
\]
Thus,
\begin{align*}
r n + \lfloor \varepsilon k \rfloor 
&= d_{\rho_{r-1}+1} + \lfloor \varepsilon k \rfloor
= k \ell_{\rho_{r-1}+1} + \overline{d_{\rho_{r-1}+1}} \\[5pt]
&= k \ell_{\rho_{r}} + \lfloor \varepsilon k \rfloor - rt
= r ( k \ell - t) + \lfloor \varepsilon k \rfloor.
\end{align*}
Hence, in this case, $n = k \ell - t$, and the value of $n$ can be viewed as coming from the expression $f_{1}(x) x^{n}$
that one obtains by dividing the polynomial in $\eqref{polytwoeq}$ by $x^{rt}$ and replacing $y$ with $x^{k}$.

The condition on $k$ in Theorem~\ref{themainthm} follows from our choice of $k \ge k_{0}$.  
Recall that the $t_{j}$ are non-negative integers $< 2\varepsilon k$ and that $\varepsilon = 1/(2r+2)$.  
As $t_{0}, t_{1}, \ldots, t_{r}$ are the same as $0, t, 2t, \ldots, rt$ in some order, we deduce $t < k/(r(r+1))$.  

This completes the justification of Theorem~\ref{themainthm}.


\section{Applications}\label{sectapplications}

In this section, we give three examples using a couple different approaches to show how one
can apply Theorem~\ref{themainthm}.  The first example is meant just to demonstrate the power of
the approach for non-reciprocal polynomials.  
The last two examples come from the Laurent polynomials discussed
in the introduction.  Recalling the first one listed there was handled in \cite{filgar}, the last two examples here
address the two other Laurent polynomials beginning with the last one.

\vskip 5pt \noindent
\textbf{Example 1.}
Let $n \ge 2$ be an integer, and set
\begin{align*}
G(x) = x^{6n} + (x+1) &x^{5n+1} + 2x^{4n} + (x^4 - x^3 - x^2 - 2x - 2)x^{3n-2} \\
&\quad+ 2x^{2n} + (x+1) x^{n-2} + 1.
\end{align*}
One can check that if $n = 4$,  the polynomial factors as a product of two irreducible
non-reciprocal polynomials.  
For $n$ sufficiently large, we show that $G(x)$ has no reciprocal irreducible factors 
and the non-reciprocal part of $G(x)$ is irreducible.  
Then it will follow that $G(x)$ is irreducible for all $n$ sufficiently large.

Assume $u(x)$ is an irreducible reciprocal factor of $G(x)$, 
and let $\alpha$ be a root of $u(x)$.  
Since $G(0) \ne 0$, we see that $\alpha \ne 0$, and the definition of
$u(x)$ being reciprocal implies that $1/\alpha$ is a root of $u(x)$.
We deduce that $\alpha$ is a root of $G(x)$ and of $\widetilde{G}(x) = x^{\deg G} G(1/x)$. 
Since
\begin{align*}
\widetilde{G}(x) = x^{6n} + (x+1) &x^{5n+1} + 2x^{4n} + (-2 x^4 -2 x^3 - x^2 - x +1)x^{3n-2} \\
&\quad+ 2x^{2n} + (x+1) x^{n-2} + 1,
\end{align*}
we deduce that $\alpha$ is a root of 
\[
G(x) - \widetilde{G}(x) = (3 x^4 + x^3 - x - 3) x^{3n-2}.
\]
Therefore, $u(x)$ is an irreducible factor of 
\[
3 x^4 + x^3 - x - 3
= (x - 1)(x + 1)(3x^2 + x + 3).
\]
One checks that $\pm 1$ are not roots of $G(x)$.  
Furthermore, since $G(x) \in \mathbb Z[x]$ is monic, the irreducible polynomial $3x^2 + x + 3$ cannot
be a factor of $G(x)$.  We obtain a contradiction.  Thus, $G(x)$ has no reciprocal irreducible factors.

Let $n$ be sufficiently large.
Let 
\[
F(x,y) = \sum_{j=0}^{6} f_{j}(x) y^{j},
\]
where
\begin{gather*}
f_{0}(x) = 1, \quad
f_{1}(x) = x+1, \quad
f_{2}(x) = 2x^{4}, \\
f_{3}(x) = x^{4} (x^4 - x^3 - x^2 - 2x - 2), \\
f_{4}(x) = 2x^{8}, \quad
f_{5}(x) = x^{11} (x+1) \quad \text{and} \quad
f_{6}(x) = x^{12}.
\end{gather*}
Then $F(x,x^{n-2}) = G(x)$.  

Our goal now is to apply Theorem~\ref{themainthm} or, for this example, Theorem~\ref{newthmtouse}.
Let
\[
F_{1,t}(x,y) = \sum_{j=0}^{6} f_{j}(x) x^{jt} y^{j}
\quad \text{ and } \quad
F_{2,t}(x,y) = \sum_{j=0}^{6} f_{j}(x) x^{(6-j)t} y^{j}.
\]
To show $F_{1,t}(x,y^{\ell})$ and $F_{2,t}(x,y^{\ell})$ are a power of $x$ times an irreducible polynomial for all non-negative integers $t$ and positive integers $\ell$,
we apply Theorem~\ref{thmgeneralpoly}.  
Given the condition in Theorem~\ref{thmgeneralpoly} that $F(x,y)$ is irreducible in $\mathbb Z[x,y]$, we want to first justify that 
$F_{1,t}(x,y)$ and $F_{2,t}(x,y)$ are a power of $x$ times an irreducible polynomial in $\mathbb Z[x,y]$ for all non-negative integers $t$
(so that we are actually applying Theorem~\ref{thmgeneralpoly} to $F_{1,t}(x,y)$ and $F_{2,t}(x,y)$ divided by an appropriate power of $x$).  
Observe that
\[
F_{1,t}\big(x,y/x^{t}\big) = F_{1,0}(x,y)
\quad \text{ and } \quad
F_{2,t}\big(x,x^{t}y\big) = x^{6t} F_{2,0}(x,y).
\]
As $f_{0}(x) = 1$, we see that there are no irreducible polynomials in $\mathbb Z[x]$ dividing all the coefficients of $F_{1,t}(x,y)$ viewed as a polynomial in $y$.  
Also, the polynomial $x$ is the only possible irreducible polynomial in $\mathbb Z[x]$ dividing all the coefficients of $F_{2,t}(x,y)$ viewed as a polynomial in $y$.  
Thus, for $j \in \{ 1, 2 \}$, each irreducible factor of $F_{j,t}(x,y)$ in $\mathbb Z[x,y]$ other than $x$ has positive degree in $y$.  
Therefore, $F_{j,t}(x,y)$ is a power of $x$ times an irreducible polynomial if and only if $F_{j,0}(x,y)$ is,
which follows from the above in $\mathbb Q(x)[y]$ and then by Gauss's Lemma for $\mathbb Z[x,y]$.
One checks directly that the two polynomials $F_{1,0}(x,y)$ and $F_{2,0}(x,y)$ are irreducible.
Therefore, $F_{1,t}(x,y)$ and $F_{2,t}(x,y)$ are a power of $x$ times an irreducible polynomial in $\mathbb Z[x,y]$ for all non-negative integers $t$

Since $f_{1}(x) = x+1$, we see that \eqref{pthpowerequatforF} with $F$ replaced by
$F_{1,t}$ and with $F$ replaced by $F_{2,t}$ does not hold for each prime $p$.  
To finish applying Theorem~\ref{thmgeneralpoly}, 
it remains to show $F_{1,t}(x,y^{4})$ and $F_{2,t}(x,y^{4})$ are a power of $x$ times an irreducible polynomial for all non-negative integers $t$.

Write $t = 4q + r$ where $q \in \mathbb Z$ and $r \in \{ 0,1,2,3 \}$.  
Observe that
\[
F_{1,t}\big(x,(y/x^{q})^{4}\big) = F_{1,r}(x,y^{4})
\quad \text{ and } \quad
F_{2,t}\big(x,(x^{q}y)^{4}\big) = x^{24q} F_{2,r}(x,y^{4}).
\]
The only possible common irreducible factor in $\mathbb Z[x]$ among the coefficients of $F_{1,t}(x,y)$ and $F_{2,t}(x,y)$ 
as polynomials in $y$ is $x$ and only in the case of $F_{2,t}(x,y)$.  
For $j \in \{ 1,2 \}$, a factorization of $F_{j,t}(x,y^{4})$ in $\mathbb Q(x)[y]$ into polynomials of degree $> 0$ in $y$ 
will correspond to a factorization of $F_{j,r}(x,y^{4})$ in $\mathbb Q(x)[y]$ into polynomials of degree $> 0$ in $y$, and vice versa.
We deduce that $F_{j,t}(x,y^{4})$ is a power of $x$ times an irreducible polynomial in $\mathbb Z[x,y]$
if and only if $F_{j,r}(x,y^{4})$ is a power of $x$ times an irreducible polynomial in $\mathbb Z[x,y]$.
As $r \in \{ 0,1,2,3 \}$ and $j \in \{ 1,2 \}$, one can check directly that the eight polynomials $F_{j,r}(x,y^{4})$ 
are a power of $x$ times an irreducible polynomial in $\mathbb Z[x,y]$.  

Given $G(x)$ has no irreducible reciprocal factors,
Theorem~\ref{themainthm} now implies that $G(x)$ is irreducible for all $n$ sufficiently large.

\vskip 5pt \noindent
\textbf{Example 2.}
In the introduction, we mentioned the Laurent polynomial
\[
x^{4b} - x^{2b} - x^{a} - 2 - x^{-a} - x^{-2b} + x^{-4b}
\]
associated with the trace field under $-a/b$ Dehn fillings of the complement of the $4_1$ knot.  
Here, $a$ and $b$ are coprime positive integers.
For fixed $a$ and sufficiently large $b$ or for fixed $b$ and sufficiently large $a$, 
one can apply the prior material to obtain information on 
the factorization of the non-reciprocal part of the polynomial
$\capJ (x^{4b} - x^{2b} - x^{a} - 2 - x^{-a} - x^{-2b} + x^{-4b})$.  

We illustrate this here by considering the case that $a$ is fixed and $b$ is sufficiently large.
Then we are interested in the factorization of
\begin{align*}
F_{0}(x) &= x^{8b} - x^{6b} - x^{4b+a} - 2x^{4b} - x^{4b-a} - x^{2b} + 1 \\
&= x^{4a} x^{4(2b-a)} - x^{3a} x^{3(2b-a)} - (x^{a} + 1)^{2} x^{a} x^{2(2b-a)} - x^{a} x^{2b-a} + 1.
\end{align*}
We define
\[
F(x,y) = f_{0}(x) + f_{1}(x) y +  f_{2}(x) y^{2} +  f_{3}(x) y^{3} +  f_{4}(x) y^{4},
\]
where
\begin{gather*}
f_{0}(x) = 1, \ 
f_{1}(x) = -x^{a}, \ 
f_{2}(x) = - (x^{a} + 1)^{2} x^{a}, \\
f_{3}(x) =  - x^{3a}, \  \text{ and } \ 
f_{4}(x) = x^{4a}.
\end{gather*}
Thus, with $n = 2b-a$, we have $F(x,x^{n}) = F_{0}(x)$.  
Further, define 
\[
F_{1}(x,y) = f_{0}(x) + f_{1}(x) x^{t} y +  f_{2}(x) x^{2t} y^{2} +  f_{3}(x) x^{3t} y^{3} +  f_{4}(x) x^{4t} y^{4}
\]
and
\[
F_{2}(x,y) = f_{0}(x) x^{4t} + f_{1}(x) x^{3t} y +  f_{2}(x) x^{2t} y^{2} +  f_{3}(x) x^{t} y^{3} +  f_{4}(x) y^{4}.
\]
For Theorem~\ref{themainthm}, we want to determine whether the polynomials
$F_{1}(x,y^{\ell})$ and $F_{2}(x,y^{\ell})$
are a power of $x$ times an irreducible polynomial in $\mathbb Z[x,y]$.

For $j \in \{ 1,2 \}$,
we write $F_{j}(x,y) = x^{k_{j}} G_{j}(x,y)$ where $k_{j} \ge 0$ is an integer
and $G_{j}(x,y) \in \mathbb Z[x,y]$ is not divisible by $x$.   
Since $f_{0}(x) = 1$, we see that $k_{1} = 0$.  
We want to apply Theorem~\ref{thmgeneralpoly} to $G_{j}(x,y)$, so 
we show first that $G_{j}(x,y)$ is irreducible.
In the example at the end of Section~\ref{twovarsection}, we saw that
the polynomial
\[
1-x(x+1)y^{n}-2x^2y^{2n}-x^2(x+1)y^{3n}+x^4y^{4n}
\]
is irreducible for every positive integer $n$.
By a change of variables, we see then that
\begin{align*}
1-y(y+1)&x^{a}-2y^2x^{2a}-y^2(y+1)x^{3a}+y^4x^{4a} \\
&= 1 - x^{a} y - (x^{a} + 1)^{2} x^{a} y^{2} - x^{3a} y^{3} + x^{4a} y^{4} \\
&= f_{0}(x) + f_{1}(x) y +  f_{2}(x) y^{2} +  f_{3}(x) y^{3} +  f_{4}(x) y^{4}
\end{align*}
is irreducible for every positive integer $a$.  
Observe that this is the same as $F_{1}(x,y)$ and $F_{2}(x,y)$ with $t = 0$.
We take advantage of this momentarily.

Assume $G_{1}(x,y) = F_{1}(x,y) = U(x,y) V(x,y)$ where $U(x,y)$ and $V(x,y)$ are nonunits in $\mathbb Z[x,y]$.
Since $f_{0}(x) = 1$, the polynomials $f_{0}(x)$, $f_{2}(x) x^{2t}$ and $f_{4}(x) x^{4t}$ have no common factor in $\mathbb Z[x]$, 
so the degrees of $U(x,y)$ and $V(x,y)$ in the variable $y$ are each at least $1$.  
Since
\begin{align*}
 f_{0}(x) + f_{1}(x) y +  f_{2}(x) y^{2} +  f_{3}(x) y^{3} +  f_{4}(x) y^{4} 
&= G_{1}(x,y/x^{t}) \\
&= U(x,y/x^{t}) \,V(x,y/x^{t}),
\end{align*}
we see that $f_{0}(x) + f_{1}(x) y +  f_{2}(x) y^{2} +  f_{3}(x) y^{3} +  f_{4}(x) y^{4}$ is reducible in $\mathbb Q(x)[y]$ and,
hence, in $\mathbb Z[x,y]$.  However, as noted above, $f_{0}(x) + f_{1}(x) y +  f_{2}(x) y^{2} +  f_{3}(x) y^{3} +  f_{4}(x) y^{4}$
is irreducible independent of the choice of $a \ge 1$.  
Thus, we obtain a contradiction, and $G_{1}(x,y)$ is irreducible.

Assume now $x^{k_{2}} G_{2}(x,y) = F_{2}(x,y) = x^{k_{2}} U(x,y) V(x,y)$ where $U(x,y)$ and $V(x,y)$ are nonunits in $\mathbb Z[x,y]$.
As $x$ is not a factor of $G_{2}(x,y) = U(x,y) V(x,y)$ and $f_{0}(x) = 1$, again we have
the degrees of $U(x,y)$ and $V(x,y)$ in the variable $y$ are each at least $1$.  
In this case,
\begin{align*}
x^{4t} \big( f_{0}(x) + f_{1}(x) y +  f_{2}(x) y^{2} +  f_{3}(x) y^{3} &+  f_{4}(x) y^{4} \big)
= F_{2}(x,x^{t}y) \\
&\quad = x^{k_{2}} U(x,x^{t}y) \,V(x,x^{t}y),
\end{align*}
so that again we deduce $f_{0}(x) + f_{1}(x) y +  f_{2}(x) y^{2} +  f_{3}(x) y^{3} +  f_{4}(x) y^{4}$ is reducible in $\mathbb Q(x)[y]$, 
leading to a contradiction.  Thus, $G_{2}(x,y)$ is irreducible.

Now, consider $G_{1}(x,y) = F_{1}(x,y)$.  
Recalling Theorem~\ref{themainthm},
in the case of $F_{1}(x,y^{\ell})$, we have $n = k \ell + t$.  Also, 
$F_{0}(x) = F(x,x^{n}) = F_{1}(x,x^{k\ell})$ so that $n = 2b-a$.  

Assume that $F_{1}(x,y^{4})$ is irreducible but 
$F_{1}(x,y^{\ell})$ is reducible for some positive integer $\ell$.  
By Theorem~\ref{thmgeneralpoly}, there must then be an 
odd prime $p$ dividing $\ell$ for which $F_{1}(x,y^{p})$ is reducible.  Further, as stated
in Theorem~\ref{thmgeneralpoly}, we obtain \eqref{pthpowerequatforF} with $F$ there replaced by $F_{1}$.  
Observe that if $p|a$ and $p|t$, then in fact \eqref{pthpowerequatforF} holds with $F$ replaced by $F_{1}$.  
However, then we have $p|a$, $p|t$ and $p|\ell$ so that $p|(k\ell+t)$ which is equivalent to $p|n$.  
As $n = 2b-a$ and $p$ is an odd prime dividing $a$, we would have $p|b$, contrary to the condition
given that $a$ and $b$ are coprime positive integers.  Thus, either $p \nmid a$ or $p \nmid t$.  
Using \eqref{pthpowerequatforF} with $F$ replaced by $F_{1}$, we see that $f_{1}(x) x^{t}$ is a $p$th power
modulo $p$ and, consequently, that $p | (a+t)$.  Also, we have that $f_{2}(x) x^{2t}$ is a $p$th power
modulo $p$, so its degree as a polynomial modulo $p$, which is $3a + 2t$, must be divisible by $p$.  
On the other hand, $p$ dividing both $a+t$ and $3a + 2t$ implies $p|a$ and $p|t$, giving a contradiction.

For the case of $F_{1}(x,y)$, we are left with determining whether $F_{1}(x,y^{4})$ is reducible. 
Observe that if $F_{1}(x,y^{4})$ is reducible, the remarks after Theorem~\ref{thmgeneralpoly}
immediately give us that $F_{1}(x,y^{2})$ is reducible and \eqref{pthpowerequatforF} holds with 
$F$ there replaced by $F_{1}$ and $p = 2$.  From \eqref{pthpowerequatforF}, we deduce that
$f_{1}(x) x^{t}$ and $f_{2}(x) x^{2t}$ are squares modulo $2$ so that $a+t$ and $a+2t$ are even.  
Hence, we have that both $a$ and $t$ are even.  We make particular use of the latter.

We write $t$ in the form $2q$, where $q$ is a non-negative integer.    
Since $f_{0}(x) = 1$, we see that if $F_{1}(x,y^{2})$ is reducible, then so is $F_{1}(x,(y/x^{q})^{2})$.  
As $F_{1}(x,(y/x^{q})^{2})$ takes the form of $F_{1}(x,y^{2})$ with $t = 0$, we can
restrict our attention to determining whether $F_{1}(x,y^{2})$ is reducible with $t = 0$.  

Set $t = 0$.  
Let $W(x,y) = F_{1}(x,y^{2})$.
We provide here an alternative approach from prior material in this paper for establishing that $W(x,y)$ is irreducible.  
The degree of $F_{1}(x,y^{2})$ in $y$ is $8$.  We consider explicit values of $y \in \mathbb Z$
for which $W(x,y)$ can be checked to be irreducible in $\mathbb Z[x]$.  For example, consider
\[
W(x,2) = 2^{8} x^{4a} - 2^{6} x^{3a} - 2^{4} (x^{a}+1)^{2} x^{a} - 2^{2} x^{a} +1.
\]
We write $W(x,2) = w(x^{a})$ where in this case
\[
w(x) = 256 x^{4} - 80 x^{3} - 32 x^{2} - 20 x + 1.
\]
One checks that the quartic $w(x)$ is irreducible in $\mathbb Z[x]$ (using Maple, for example).  
By Capelli's Theorem, $w(x^{a})$ can be reducible only if $p|a$ for some prime $p$ and $w(x^{p})$ is reducible
or $4|a$ and $w(x^{4})$ is reducible.  A computation shows that $w(x^{4})$ is irreducible, so this second case
does not happen.  In the first case, Capelli's theorem implies we furthermore can assume that if $\gamma$ is a root of $w(x)$, 
then $\gamma$ is a $p$th power in $\mathbb Q(\gamma)$ so that the norm of $\gamma$ is a $p$th power in $\mathbb Q$.  
The norm of a root of $w(x)$ is $1/256 = (1/2)^{8}$.  Hence, in this case $p = 2$.  As we have already verified
that $w(x^{4})$ is irreducible, we know $w(x^{2})$ is irreducible.   Thus, $W(x,2)$ is irreducible for all integers $a \ge 1$.

The idea now is to repeat the above process to obtain the irreducibility of $W(x,k)$ for $17$ different $k \in \mathbb Z$
and for all positive integers $a$.  We used $k \in \{ 2, 3, \ldots, 18  \}$. 
Let $w_{k}(x)$ be the analog to $w(x)$ above so that $w_{k}(x^{a}) = W(x,k)$.  
For each $k$, we verified computationally the irreducibility of $w_{k}(x^{a})$ for $a \in \{ 1, 3, 4 \}$.  
As the norm of a root of $w_{k}(x)$ is $1/k^{8}$ and this can be a $p$th power for $k \in \{ 2, 3, \ldots, 18  \}$ only for
$p \in \{ 2, 3 \}$, we deduce as above that $W(x,k)$ is irreducible for each $k \in \{ 2, 3, \ldots, 18  \}$ and 
for all positive integers $a$.  

Assume that there is a positive integer $a$ for which $W(x,y)$ is reducible in $\mathbb Z[x,y]$.  
As the constant term of $W(x,y)$ is $1$, we deduce that 
$W(x,y) = F_{1}(x,y^{2}) = U(x,y) V(x,y)$ where $U(x,y)$ and $V(x,y)$ are in $\mathbb Z[x,y]$
and each of degree $\ge 1$ in $y$.  Denote these degrees by $d_{U}$ and $d_{V}$, respectively.   
For each $k \in \{ 2, 3, \ldots, 18 \}$, the
irreducibility of $W(x,k) = U(x,k) V(x,k)$ in $\mathbb Z[x]$ implies that one of $U(x,k)$ or $V(x,k)$ is $\pm 1$.  
Since $W(x,y) = U(x,y) V(x,y)$, we have $d_{U} + d_{V} = 8$.  The pigeon-hole principle gives
us that there is a subset $S$ of $\{ 2, 3, \ldots, 18 \}$ such that one of the following holds:
\begin{itemize}\setlength{\itemsep}{0pt}
\item
The size of $S$ is $d_{U}+1$ and $U(x,k) = 1$ for all $k \in S$.
\item
The size of $S$ is $d_{U}+1$ and $U(x,k) = -1$ for all $k \in S$.
\item
The size of $S$ is $d_{V}+1$ and $V(x,k) = 1$ for all $k \in S$.
\item
The size of $S$ is $d_{V}+1$ and $V(x,k) = -1$ for all $k \in S$.
\end{itemize}
We consider the first possibility above, noting the argument in each case is similar.  Writing
$S = \{ k_{0}, k_{1} \ldots, k_{s} \}$ and 
$U(x,y) = u_{0}(x) + u_{1}(x)y + \cdots + u_{s}(x) y^{s}$ where $s = d_{U}$, we obtain
\[
u_{0}(x) + u_{1}(x)k_{j} + \cdots + u_{s}(x) k_{j}^{s} = 1 \quad
\text{for } 0 \le j \le s.
\]
Thus, 
\[
\begin{pmatrix}
1 & k_{0} & \cdots & k_{0}^{s} \\
1 & k_{1} & \cdots & k_{1}^{s} \\
\vdots & \vdots & \ddots & \vdots \\
1 & k_{s} & \cdots & k_{s}^{s} 
\end{pmatrix}
\begin{pmatrix}
u_{0}(x) \\
u_{1}(x) \\
\vdots \\
u_{s}(x)
\end{pmatrix}
= \begin{pmatrix}
1 \\
1 \\
\vdots \\
1
\end{pmatrix}.
\]
The matrix above is a Vandermonde matrix, and its determinant is non-zero.  Hence, there
are unique $u_{0}(x), u_{1}(x), \ldots, u_{s}(x)$ satisfying the matrix equation above.  
Since the values $u_{0}(x) = 1$ and $u_{j}(x) = 0$ for $1 \le j \le s$ satisfy the equation,
we deduce that $U(x,y) = 1$, contradicting that $d_{U} \ge 1$.   
Therefore, $W(x,y)$ is irreducible in $\mathbb Z[x,y]$ for all positive integers $a$.

So far we have shown that $F_{1}(x,y^{\ell})$ is irreducible in $\mathbb Z[x,y]$ for all 
integers $t \ge 0$ and $\ell \ge 0$ for which $n = 2b-a = k\ell + t$.  We now turn to 
$F_{2}(x,y^{\ell})$ where the integers $t \ge 0$ and $\ell \ge 0$ that we are interested 
in satisfy $n = 2b-a = k\ell - t$.   We give a similar argument to that just given for $F_{1}(x,y)$.  
In the case of $F_{2}(x,y)$, we have $F_{2}(x,y) = x^{k_{2}} G_{2}(x,y)$ and $k_{2}$ may be positive
(and will be unless $t = 0$).  We are interested 
in establishing that $F_{2}(x,y^{\ell})$ is a power of $x$ times an irreducible polynomial.
In other words, we want to show $G_{2}(x,y^{\ell})$ is irreducible.
We note that the explicit values of $f_{j}(x)$ give that 
\[
k_{2} = 
\begin{cases}
4t &\text{if } 0 \le t \le a/2 \\
a+2t &\text{if } a/2 < t \le 3a/2 \\
4a &\text{if } t > 3a/2.
\end{cases}
\]
Fix an odd prime $p$ satisfying \eqref{primebound}.
To simplify notation, we set $u = k_{2}$.  
Observe that $u$ is independent of $p$ and $u \le 4t$.  
Assume that $F_{2}(x,y^{p})/x^{u} = G_{2}(x,y^{p})$ is a reducible polynomial.  By Theorem~\ref{thmgeneralpoly},
we obtain that \eqref{pthpowerequatforF} holds.  
Looking at the constant term and the coefficients of $y$ and $y^{2}$ in $F_{2}(x,y^{p})/x^{u}$, 
we see that $p|(4t-u)$, $p|(a+3t-u)$ and $p|(3a+2t-u)$.  These imply $p|a$, $p|t$ and $p|u$.  
As in the case of $F_{1}(x,y^{p})$, this leads to $p$ dividing both $a$ and $b$, contradicting that
$a$ and $b$ are relatively prime.

Defining $u$ as above, we assume now that $F_{2}(x,y^{4})/x^{u} = G_{2}(x,y^{4})$ is reducible.  
As with $F_{1}(x,y)$, we can deduce from the comments after Theorem~\ref{thmgeneralpoly}
that $F_{2}(x,y^{2})/x^{u}$ is reducible.  Taking $p = 2$ in our discussion above about
$F_{2}(x,y^{p})/x^{u}$, we deduce $2|t$, $2|u$ and $2|a$ (but not necessarily that $2|b$).  
Writing $t = 2q$, we see that $F_{2}(x,(x^{q}y)^{2})/x^{8q}$ is of the form  
$F_{2}(x,y^{2})$ with $t = 0$.  Furthermore, as the coefficients of $F_{2}(x,y^{2})/x^{u}$
as a polynomial in $y$ have no common irreducible factor in $\mathbb Z[x]$, we can conclude
that if $F_{2}(x,y^{2})/x^{u}$ is reducible, then $F_{2}(x,y^{2})$ with $t = 0$ is reducible.  

With $t = 0$, we have
\[
F_{2}(x,y) = F_{1}(x,y) = F(x,y).
\]
As we have already established in this case that $F_{1}(x,y^{2})$ is irreducible for all positive integers $a$, 
we deduce that $F_{2}(x,y^{2})$ is irreducible for all positive integers $a$.

We therefore have that any polynomial $F_{1}(x,y)$ or $F_{2}(x,y)$ arising from $F_{0}(x)$ is a power of $x$ times
an irreducible polynomial in $\mathbb Z[x,y]$.  
Theorem~\ref{themainthm} now implies that the non-reciprocal part of $F_{0}(x)$ is not reducible for each fixed positive integer $a$ and 
every sufficiently large positive integer $b$ relatively prime to $a$.  The phrase ``not reducible'' is
deliberate here, as $F_{0}(x)$ is a reciprocal polynomial and cannot have exactly one irreducible non-reciprocal 
factor.  The fact that the non-reciprocal part of $F_{0}(x)$ is not reducible implies then that the non-reciprocal
part of $F_{0}(x)$ is identically $1$.  In other words, we deduce that $F_{0}(x)$ is a product of irreducible 
reciprocal polynomials (for fixed $a$ and sufficiently large $b$ relatively prime to $a$).

\vskip 5pt \noindent
\textbf{Example 3.}
We consider
\[
F(x) = x^{4n}+\left(-x^3+4x^2-8+4x^{-2}-x^{-3}\right)x^{2n}+1,
\]
associated with the trace field of the Whitehead link for the complement of the $(-2,3,3+2n)$ pretzel knot $K_n$.  
This polynomial $F(x)$ is in $\mathbb Z[x]$ for all $n \ge 2$, and 
one easily checks that $(x-1)^{2}$ is a factor.  
A computation suggests that the other factor is always irreducible.  
Observe that the polynomial $x^{2n}+\left(-x^3+4x^2-8+4x^{-2}-x^{-3}\right)x^{n}+1$ with more general exponents
does not have this property.   For example, for $n = 7$, this polynomial factors as
\[
(x^4+x^3+3x^2+x+1)(x^6-x^5+2x^3-x+1)(x-1)^2 (x+1)^2.
\]
It is reasonable to obtain a similar result below for this more general polynomial; however, some adjustments would
be needed as the analog to \eqref{egtwoeq} below is reducible for $r = 0$ so that one should look at the factors that 
arise instead and apply Theorem~\ref{thmgeneralpoly} to them. 

We show here that, for $n$ sufficiently large, $F(x)$ is a product of reciprocal irreducible polynomials.  
We rewrite
\[
F(x) = f_{0}(x) + f_{1}(x) x^{n-2}  + f_{2}(x) x^{2(n-2)} + f_{3}(x) x^{3(n-2)}  + f_{4}(x) x^{4(n-2)},
\]
where
\begin{gather*}
f_{0}(x) = 1, \ 
f_{1}(x) = 0, \ 
f_{2}(x) = -x^7+4x^6-8x^{4}+4x^{2}-x, \\
f_{3}(x) = 0, \  \text{ and } \ 
f_{4}(x) = x^{8}.
\end{gather*}
To apply Theorem~\ref{themainthm}, we want to determine whether the polynomials
$F_{1}(x,y^{\ell})$ and $F_{2}(x,y^{\ell})$ are a power of $x$ times an irreducible polynomial in $\mathbb Z[x,y]$,
where
\[
F_{1}(x,y) = f_{0}(x) 
+ f_{2}(x) x^{2t} y^{2}
+ f_{4}(x) x^{4t} y^{4}
\]
and
\[
F_{2}(x,y) = f_{0}(x) x^{4t}
+ f_{2}(x) x^{2t} y^{2}
+ f_{4}(x) y^{4}.
\] 
We make use of Theorem~\ref{thmgeneralpoly}.

First, we justify that $F_{1}(x,y)$ and $F_{2}(x,y)$ are a power of $x$ times an irreducible polynomial in $\mathbb Z[x,y]$
for all non-negative integers $t$.  In other words, we justify that
\[
F_{j}(x,y) = x^{k_{j}} G_{j}(x,y), 
\]
where $k_{j} \ge 0$ is an integer and
$G_{j}(x,y) \in \mathbb Z[x,y]$ is irreducible for $j \in \{ 1, 2 \}$.  
Assume that $F_{1}(x,y) = U(x,y) V(x,y)$, where $U(x,y)$ and $V(x,y)$ are nonunits in $\mathbb Z[x,y]$.  
Since $f_{0}(x) = 1$, we see that each of $U(x,y)$ and $V(x,y)$ has degree at least $1$ in $y$.
Thus, $F_{1}(x,y/x^{t}) = U(x,y/x^{t}) V(x,y/x^{t})$ provides a non-trivial factorization of 
$f_{0}(x) + f_{2}(x) y^{2} + f_{4}(x) y^{4}$ in $Q(x)[y]$.  However, one can check directly that 
\[
f_{0}(x) + f_{2}(x) y^{2} + f_{4}(x) y^{4}
= 1 + (-x^7+4x^6-8x^4+4x^{2}-x)y^{2} + x^8y^4
\]
is irreducible in $Q(x)[y]$, giving a contradiction.  So $F_{1}(x,y)$ is irreducible in $\mathbb Z[x,y]$;
in other words, one can take $k_{1} = 0$ and $G_{1}(x,y) = F_{1}(x,y)$.  
Observe that $F_{2}(x,y)$ is divisible by $x^{8}$ for $t \ge 4$, so $k_{2} \ne 0$ in general.  
Since $k_{2}$ is in fact the exponent on the largest power of $x$ dividing every coefficient of $F_{2}(x,y)$ viewed as a polynomial in $y$, 
we see that $k_{2} = 2t$ if $t < 4$ and $k_{2} = 8$ if $t \ge 4$.
With this value of $k_2$, assume now that $F_{2}(x,y) = x^{k_{2}} U(x,y) V(x,y)$, where $U(x,y)$ and $V(x,y)$ are nonunits in $\mathbb Z[x,y]$.  
Given the definition of $k_{2}$ and that $f_{0}(x) = 1$, we see that each of $U(x,y)$ and $V(x,y)$ has degree at least $1$ in $y$.  
In this case, we see that 
\[
x^{4t} \big( f_{0}(x) + f_{2}(x) y^{2} + f_{4}(x) y^{4} \big)
= F_{2}(x,x^{t}y) = x^{k_{2}} U(x,x^{t}y) V(x,x^{t}y)
\] 
leads to a non-trivial factorization of 
$f_{0}(x) + f_{2}(x) y^{2} + f_{4}(x) y^{4}$ in $Q(x)[y]$, which again is a contradiction.
Thus, $F_{2}(x,y) = x^{k_{2}} G_{2}(x,y)$, where $G_{2}(x,y)$ is irreducible in $\mathbb Z[x,y]$.  
We are now ready to apply Theorem~\ref{thmgeneralpoly} to $G_{j}(x,y)$.
 
Let $g(x) = -x^6+4x^5-8x^{3}+4x-1$, so $f_{2}(x) = x g(x)$.  For each odd prime $p$, observe that
$g(0) \equiv -1 \pmod{p}$ and the degree of $g(x)$ is $6$ modulo $p$.   
Hence, $f_{2}(x)$ times a power of $x$ cannot be a $p$th power modulo $p$ if $p > 7$.  
A direct check shows that $g(x)$ is furthermore
not a $p$th power modulo $p$ for $p \in \{ 3, 5 \}$, and hence the same holds for $f_{2}(x)$ times a power of $x$.  
It follows that $G_{1}(x,y)$ and $G_{2}(x,y)$ do not satisfy \eqref{pthpowerequatforF} for any odd prime $p$.

To finish applying Theorem~\ref{thmgeneralpoly}, we are left with considering the factorization of
$G_{1}(x,y^{4})$ and $G_{2}(x,y^{4})$.
We write $t = 4 q + r$, where $r \in \{ 0, 1, 2, 3 \}$.  Then
\[
G_{1}(x,y^{4}) =
F_{1}(x,y^{4}) = f_{0}(x) 
+ f_{2}(x) x^{8q+2r} y^{8}
+ f_{4}(x) x^{16q+4r} y^{16}.
\]
Assume $G_{1}(x,y^{4}) = U(x,y) V(x,y)$ with $U(x,y)$ and $V(x,y)$ in $\mathbb Z[x,y]$ with neither $\pm 1$.  
As $\gcd{}_{\mathbb Z[x]}\big(f_{0}(x), f_{2}(x), f_{4}(x) \big) = 1$, each of $U(x,y)$ and $V(x,y)$ has degree
at least $1$ in $y$.  Observe that
\begin{equation}\label{egtwoeq}
U(x, y/x^{q}) V(x, y/x^{q}) = f_{0}(x) 
+ f_{2}(x) x^{2r} y^{8}
+ f_{4}(x) x^{4r} y^{16}.
\end{equation}
We deduce that $f_{0}(x) + f_{2}(x) x^{2r} y^{8} + f_{4}(x) x^{4r} y^{16}$ 
is reducible in $\mathbb Q(x)[y]$ and, hence, in $\mathbb Z[x,y]$.  As $r \in \{ 0, 1, 2, 3 \}$, 
this is easily checked by a computation.  As the $4$ polynomials $f_{0}(x) + f_{2}(x) x^{2r} y^{8} + f_{4}(x) x^{4r} y^{16}$
for $r \in \{ 0, 1, 2, 3 \}$ are irreducible in $\mathbb Z[x,y]$, we conclude that $F_{1}(x,y^{4})$ is irreducible.

The substitution $t = 4 q + r$, where $r \in \{ 0, 1, 2, 3 \}$ in $F_{2}(x,y^{4})$ gives
\[
F_{2}(x,y^{4}) = f_{0}(x) x^{16q+4r}
+ f_{2}(x) x^{8q+2r} y^{8}
+ f_{4}(x) y^{16}.
\]
Note that $F_{2}(x,y^{4})$ is divisible by $f_{4}(x) = x^{8}$ if $q \ge 1$.  We are wanting to know if 
$G_{2}(x,y^{4})$ is an irreducible polynomial in $\mathbb Z[x,y]$.  Assume that
$F_{2}(x,y^{4}) = x^{k_{2}} G_{2}(x,y^{4}) = x^{k_{2}} U(x,y) V(x,y)$ with $U(x,y)$ and $V(x,y)$ in $\mathbb Z[x,y]$ with neither $\pm 1$ and neither divisible by $x$.  
As before, each of $U(x,y)$ and $V(x,y)$ has degree at least $1$ in $y$.  Also, 
\[
x^{k_{2}} U(x, x^{q} y) V(x, x^{q} y) =  x^{16q} \big( f_{0}(x) x^{4r}
+ f_{2}(x) x^{2r} y^{8}
+ f_{4}(x) y^{16} \big),
\]
and we deduce that $f_{0}(x) x^{4r} + f_{2}(x) x^{2r} y^{8} + f_{4}(x) y^{16}$ factors as a power of $x$ times a product of two polynomials in $\mathbb Z[x,y]$, each
with degree $\ge 1$ in $y$.  
We check that this is not the case so that $G_{2}(x,y^{4})$ is irreducible.

Hence, Theorem~\ref{thmgeneralpoly} implies 
$F_{1}(x,y^{\ell})$ and $F_{2}(x,y^{\ell})$ are a power of $x$ times an irreducible polynomial in $\mathbb Z[x,y]$
for every positive integer $\ell$.  
Then Theorem~\ref{themainthm} implies that the non-reciprocal part of $F(x)$ is not reducible for $n$ sufficiently large.
Since $F(x)$ is a reciprocal polynomial, we conclude that $F(x)$ is a product of irreducible reciprocal polynomials.

\subsection*{Acknowledgements}
The author thanks Stavros Garoufalidis for discussions which helped motivate this study.  
He is also grateful for the encouragement and interest Andrzej Schinzel expressed to him on this research
during a conversation in Pozna\'n, Poland, in 2017.


\normalsize

\end{document}